\newif \ifIMA
\theoremstyle{thmstyletwo}%
\newtheorem{theorem}{Theorem}[section]%
\newtheorem{proposition}[theorem]{Proposition}%
\newtheorem{lemma}[theorem]{Lemma}%
\newtheorem{remark}[theorem]{Remark}%
\newtheorem{definition}[theorem]{Definition}
\numberwithin{equation}{section}
\numberwithin{equation}{section}
\newtheorem{theorem}{Theorem}[section]
\newtheorem{lemma}[theorem]{Lemma}
\newtheorem{proposition}[theorem]{Proposition}
\theoremstyle{definition}
\newtheorem{remark}[theorem]{Remark}
\DeclareMathAlphabet\mathbfcal{OMS}{cmsy}{b}{n}
\renewcommand{\Rt}{\mathbfcal{R\hspace{-0.1em}T}\hspace{-0.25em}}
\renewcommand{\Ne}{\mathbfcal{N}}
\newcommand\cf{cf.}
\newcommand\eg{e.g.}
\newcommand\eal{{\em et al.}}
\newcommand\crl{\mathrm{curl}}
\newcommand\nn{\nonumber}
\newcommand\ver{v}
\newcommand\bxih{\bm{\xi}\lT}
\newcommand\frh{\bm{\sigma}\lT} 
\newcommand\psia{\psi^\ver}
\newcommand\oma{\omega_\ver}
\newcommand\Tha{\mathcal{T}_\ver}
\newcommand\RTproj[1]{\bm{I}^{\bm{\mathcal{R\hspace{-0.1em}T}}}_{#1}}
\newcommand\Hdva{\mbH(\dive,\oma)}
\newcommand\refv[2]{\stackrel{#1}{#2}}
\begin{document}

\ifIMA

\DOI{DOI HERE}
\copyrightyear{2024}
\vol{00}
\pubyear{2024}
\access{Advance Access Publication Date: Day Month Year}
\appnotes{Paper}
\copyrightstatement{Published by Oxford University Press on behalf of the Institute of Mathematics and its Applications. All rights reserved.}
\firstpage{1}

\fi

\def\AEr{Alexandre Ern}
\def\AEa{CERMICS, ENPC, Institut Polytechnique de Paris, 77455 Marne-la-Vall\'ee, France \& Inria Paris, 48 rue Barrault, 75647 Paris, France}

\def\JG{Johnny Guzm\'an}
\def\JGa{Division of Applied Mathematics,
Brown University,
Box F,
182 George Street,
Providence, RI 02912, USA}

\def\PP{Pratyush Potu}
\def\PPa{Division of Applied Mathematics,
Brown University,
Box F,
182 George Street,
Providence, RI 02912, USA}

\def\MV{Martin Vohral\'ik\ifIMA*\fi}
\def\MVa{Inria Paris, 48 rue Barrault, 75647 Paris, France \& CERMICS, ENPC, Institut Polytechnique de Paris, 77455 Marne-la-Vall\'ee, France}

\title[Discrete Poincar\'e inequalities]{Discrete Poincar\'e inequalities: a review on proofs, equivalent formulations, and behavior of constants}

\ifIMA 

\author{\AEr
\address{\AEa}}

\author{\JG
\address{\JGa}}

\author{\PP
\address{\PPa}}

\author{\MV
\address{\MVa}}

\else

\author{\AEr}
\address{\AEa}
\email{\href{mailto:alexandre.ern@enpc.fr}{alexandre.ern@enpc.fr}}

\author{\JG}
\address{\JGa}
\email{\href{mailto:johnny_guzman@brown.edu}{johnny\_guzman@brown.edu}}

\author{\PP}
\address{\PPa}
\email{\href{mailto:pratyush_potu@brown.edu}{pratyush\_potu@brown.edu}}

\author{\MV}
\address{\MVa}
\email{\href{mailto:martin.vohralik@inria.fr}{martin.vohralik@inria.fr}}

\fi


\ifIMA

\authormark{Alexandre Ern et al.}

\corresp[*]{Corresponding author: \href{email:martin.vohralik@inria.fr}{martin.vohralik@inria.fr}}

\received{Date}{0}{Year}
\revised{Date}{0}{Year}
\accepted{Date}{0}{Year}

\else
\subjclass[2020]{65N30}
\fi

\def\abs{We investigate discrete Poincar\'e inequalities on piecewise polynomial subspaces of the Sobolev spaces $\Hcrl$ and $\Hdiv$ in three space dimensions. We characterize the dependence of the constants on the continuous-level constants, the shape regularity and cardinality of the underlying tetrahedral mesh, and the polynomial degree. One important focus is on meshes being local patches (stars) of tetrahedra from a larger tetrahedral mesh. We also review various equivalent results to the discrete Poincar\'e inequalities, namely stability of discrete constrained minimization problems, discrete inf-sup conditions, bounds on operator norms of piecewise polynomial vector potential operators (Poincar\'e maps), and existence of graph-stable commuting projections.}

\ifIMA

\abstract{\abs}

\else

\begin{abstract}
\abs
\end{abstract}
\fi

\keywords{Poincar\'e inequality; mixed finite elements; $\Hcrl$ space; $\Hdiv$ space; vector Laplacian; constrained minimization; stability; inf--sup condition; vector potential operator; commuting projection; polynomial-degree robustness}

\maketitle



\section{Introduction}

Let $\omega$ be a three-dimensional, open, bounded, connected, Lipschitz polyhedral domain with diameter $h_\omega$. The $L^2$-inner product in $\omega$ is denoted as $\bl\cdot,\cdot\br_\omega$ and the corresponding norm as $\|{\cdot}\|_{L^2(\omega)}$ or $\|{\cdot}\|_{\bL^2(\omega)}$ (notation is set in details in Section~\ref{sec_not} below). Let $\Tom$ be a tetrahedral mesh of $\omega$. Our main motivation is the case where $\Tom$ is some local collection (patch, star) of tetrahedra from a mesh of some larger fixed three-dimensional domain, say $\Omega$.
A large part of our developments applies in any space dimension in the language of differential forms, but we choose the three-dimensional setting to hopefully address a broad readership.

\subsection{Poincar\'e and discrete Poincar\'e inequalities on $\Hgrd = H^1(\omega)$}

The Poincar\'e inequality 
\begin{equation}
\|u\|_{L^2(\omega)} \le \CP^0 h_\omega \| \grad \, u \|_{\bL^2(\omega)}  \qquad \forall u \in \Hgrd \text{ such that } \bl u, 1\br_\omega = 0  \label{c_onto0_I}
\end{equation}
is well known and omnipresent in the analysis of partial differential equations. Crucially, $\CP^0$ is a generic constant that only depends on the shape of $\omega$. For convex $\omega$, in particular, $\CP^0 \leq 1 / \pi$ following Payne and Weinberger~\cite{Pay_Wei_Poin_conv_60} and Bebendorf~\cite{Beben_Poin_conv_03}.
The discrete version of~\eqref{c_onto0_I} for $\Hgrd$-conforming piecewise polynomials of degree $(p+1)$, $p \geq 0$, on the tetrahedral mesh $\Tom$ of $\omega$ writes
\begin{equation}
\|u\lT\|_{L^2(\omega)} \le \CPd{0} h_\omega \| \grad \, u\lT \|_{\bL^2(\omega)}  \qquad \forall u\lT \in \pol_{p+1}(\Tom) \cap \Hgrd \text{ such that } \bl u\lT, 1\br_\omega = 0.
\label{c_onto0_I_disc}
\end{equation}
As the functions considered in~\eqref{c_onto0_I_disc} form a subspace of the functions considered in~\eqref{c_onto0_I}, \eqref{c_onto0_I_disc} trivially holds with $\CPd{0} \le \CP^0$. Moreover, similar results hold with (homogeneous) boundary condition on the boundary $\partial \omega$. 

\subsection{Poincar\'e and discrete Poincar\'e inequalities on \texorpdfstring{$\Hcrl$}{H(curl)} and \texorpdfstring{$\Hdiv$}{H(div)}}

The Poincar\'e inequalities
\begin{subequations} \label{c_onto_I}
\begin{alignat}{2}
\|\bu\|_{\bL^2(\omega)} &\le \CP^1 h_\omega \|\curl \, \bu\|_{\bL^2(\omega)} \qquad && \forall  \bu \in \Hcrl \text{ such that } \bl \bu , \bv\br_\omega = 0 \nn \\
& && \qquad \forall \bv \in \Hcrl  \text{ with } \curl \, \bv=\bzero, \label{c_onto1_I} \\
\|\bu\|_{\bL^2(\omega)} &\le \CP^2 h_\omega \|\dive \, \bu\|_{L^2(\omega)},   \qquad && \forall  \bu \in \Hdiv  \text{ such that } \bl \bu, \bv\br_\omega = 0 \nn \\
& && \qquad \forall \bv \in \Hdiv \text{ with } \dive \, \bv=0  \label{c_onto2_I}
\end{alignat}
\end{subequations}
arise when the differential operators employ the curl or the divergence in place of the gradient. They are also well known, though a bit more complicated to establish.
When $\omega$ is simply connected, the orthogonality in~\eqref{c_onto1_I}
means that $\bu$ is orthogonal to gradients of $\Hgrd$ functions and thus belongs to $\Hdiv$, is divergence-free, and has zero normal component on the boundary $\partial\omega$. Thus, \eqref{c_onto1_I} is the so-called Poincar\'e--Friedrichs--Weber 
inequality, see Fernandes and Gilardi~\cite[Proposition 7.4]{Fer_Gil_Maxw_BC_97} 
or Chaumont-Frelet et al.~\cite[Theorem A.1]{Chaum_Ern_Voh_Maxw_22}. Similarly, when
$\partial\omega$ is connected, the orthogonality in~\eqref{c_onto2_I}
means that $\bu$ is orthogonal to curls of $\Hcrl$ functions and thus belongs to $\Hcrl$, 
is curl-free, and has zero tangential component on the boundary
$\partial\omega$.

In this paper, we are interested in the following {\em discrete Poincar\'e inequalities} for $\Hcrl$- and $\Hdiv$-conforming piecewise polynomials in the N\'ed\'elec and Raviart--Thomas finite element spaces of order $p$, $p\ge0$, on the tetrahedral mesh $\Tom$ of $\omega$:
\begin{subequations} \label{c_onto_I_disc}
\begin{alignat}{2}
\|\bu\lT\|_{\bL^2(\omega)} &\le \CPd{1} h_\omega \|\curl \, \bu\lT\|_{\bL^2(\omega)} \qquad && \forall  \bu\lT \in \Ne_p(\Tom) \cap \Hcrl \text{ such that } \bl \bu\lT , \bv\lT\br_\omega = 0 \nn \\
& && \qquad \forall \bv\lT \in \Ne_p(\Tom) \cap \Hcrl \text{ with } \curl \, \bv\lT=\bzero, \label{c_onto1_I_disc} \\
\|\bu\lT\|_{\bL^2(\omega)} &\le \CPd{2} h_\omega \|\dive \, \bu\lT\|_{L^2(\omega)},   \qquad && \forall  \bu\lT \in \Rt_p(\Tom) \cap \Hdiv \text{ such that } \bl \bu\lT, \bv\lT\br_\omega = 0 \nn \\
& && \qquad \forall \bv\lT \in \Rt_p(\Tom) \cap \Hdiv \text{ with } \dive \, \bv\lT=0. \label{c_onto2_I_disc}
\end{alignat}
\end{subequations}
We will also consider the counterparts with homogeneous boundary condition on the boundary $\partial \omega$. 
Here, unfortunately, the inequalities~\eqref{c_onto_I_disc} do not follow from~\eqref{c_onto_I} and $\CPd{1}$, $\CPd{2}$ cannot be trivially bounded by $\CP^1$, $\CP^2$. Indeed, the kernel of the $\curl$ operator restricted to $\Ne_p(\Tom) \cap \Hcrl$ is different from the kernel on $\Hcrl$, and similarly, the kernel of the $\dive$ operator restricted to $\Rt_p(\Tom) \cap \Hdiv$ is different from the kernel on $\Hdiv$ (the former being nontrivial and the latter being infinite-dimensional in both cases). In contrast, the kernel of the gradient operator is trivial (composed of constant functions) and is the same on $\pol_{p+1}(\Tom) \cap \Hgrd$ and $\Hgrd$, which leads to the trivial passage from~\eqref{c_onto0_I} to~\eqref{c_onto0_I_disc}.

\begin{remark}[Orthogonality constraint]
When $\omega$ is simply connected, a vector-valued piecewise polynomial $\bv\lT \in \Ne_p(\Tom) \cap \Hcrl$ with $\curl \, \bv\lT=\bzero$ is the gradient of a scalar-valued piecewise polynomial $v\lT \in \pol_{p+1}(\Tom) \cap \Hgrd$; the orthogonality constraint in~\eqref{c_onto1_I_disc} is often stated in the literature using gradients. We prefer the writing~\eqref{c_onto_I_disc}, since the orthogonality constraints in~\eqref{c_onto_I_disc} are valid for a general domain topology. 
\end{remark}

\subsection{Focus of the paper}

In the literature, one often finds assertions that the discrete Poincar\'e inequalities~\eqref{c_onto_I_disc} are  ``known''. 
The purpose of this paper is to recall several equivalent reformulations of~\eqref{c_onto_I_disc}, discuss the available references, formulate some possible proofs of~\eqref{c_onto_I_disc} in an abstract way with generic assumptions, and to establish some new results on~\eqref{c_onto_I_disc}. Our main focus is on the {\em characterization} of the {\em behavior} of the constants $\CPd{1}$, $\CPd{2}$ with respect to the {\em constants} $\CP^1$, $\CP^2$, the {\em shape-regularity parameter} of the mesh $\Tom$, the {\em number of elements} in $\Tom$, and the {\em polynomial degree} $p$. 
The motivation for writing explicitly the scaling with $h_\omega$ in~\eqref{c_onto0_I_disc} and~\eqref{c_onto_I_disc} is twofold: (i) it is important when $\omega$ corresponds to a local collection of tetrahedra from a larger mesh; (ii) it makes the constants $\CP^l$ and $\CPd{l}$, $l\in\{0,1,2\}$, dimensionless.

\subsection{Available results}

As discussed in Sections~\ref{sec_equivs} and~\ref{sec_route_2} below in more detail, the discrete Poincar\'e inequalities~\eqref{c_onto_I_disc} are {\em equivalent} to: (i) stability of discrete constrained minimization problems; (ii) discrete inf-sup conditions; (iii) bounds on operator norms of piecewise polynomial vector potential operators (that is, piecewise polynomial right-inverses for the curl and divergence operators, also called Poincar\'e maps); and (iv) existence of graph-stable commuting projections. There are also links to lower bounds on eigenvalues of vector Laplacians. Numerous results are available in the literature in one of these settings. 

The discrete Poincar\'e inequality~\eqref{c_onto1_I_disc}, with a generic constant $C$ in place of $\CPd{1} h_\omega$, is presented in Girault and Raviart~\cite[Chapter~3, Proposition~5.1]{Gir_Rav_NS_86} and Monk and Demkowicz~\cite[Corollary~4.2]{Monk_Demk_disc_inf_sup_Maxw_00} in three space dimensions and in Arnold~\eal\ \cite[Theorem~5.11]{arnold2006finite} and Arnold~\eal\ \cite[Theorem~3.6]{arnold2010finite} more abstractly in the finite element exterior calculus setting, covering both bounds in~\eqref{c_onto_I_disc}. The discrete Poincar\'e inequality in the precise form~\eqref{c_onto1_I_disc} is established in Ern and Guermond~\cite[Theorem~44.6]{EG_volII}, with $\CPd{1}$ at worst depending on the continuous-level constant $\CP^1$ from~\eqref{c_onto1_I}, the shape-regularity parameter of $\Tom$, and the polynomial degree $p$, see also~\cite[Remark~44.7]{EG_volII} for further bibliographical resources.

Discrete inf-sup conditions are extensively discussed in the mixed finite element literature.
For instance, \eqref{c_onto2_I_disc} as a discrete inf-sup condition is established, with a generic constant $C$ in place of $\CPd{2} h_\omega$, in Raviart and Thomas~\cite[Theorem~4]{Ra_Tho_MFE_77}, see also Fortin~\cite{Fort_MFEs_77}, Boffi~\eal\ \cite[Theorem~4.2.3 and Propositions~5.4.3 and~7.1.1]{BBF13}, or Gatica~\cite[Lemmas~2.6 and 4.4.]{Gat_MFEs_14}. The form leading precisely to~\eqref{c_onto2_I_disc} can be found in~\cite[Remark~51.12]{EG_volII}, with $\CPd{2}$ at worst depending on the continuous-level constant $\CP^2$ from~\eqref{c_onto2_I}, the shape-regularity parameter of $\Tom$, and the polynomial degree $p$.

Considering the operator norm of a piecewise polynomial vector potential operator, Demkowicz and Babu\v{s}ka~\cite[Theorem~1]{Demk_Bab_hp_Ned_2D_03}, Gopalakrishnan and Demkowicz~\cite[Theorems~4.1, 5.1, and~6.1]{Gop_Demk_q_opt_MFEs_04}, and Demkowicz and Buffa~\cite[Lemmas~6 and~8]{Demk_Buf_q_opt_proj_int_05} establish~\eqref{c_onto_I_disc} with a generic constant $C$ independent of the polynomial degree $p$ ({\em $p$-robustness}) on a single triangle or tetrahedron. Similar results hold on a cube and more generally on starlike domains with respect to a ball, see Costabel~\eal\ \cite{Cost_Daug_Demk_ext_08} and Costabel and McIntosh~\cite{Cost_McInt_Bog_Poinc_10}. Unfortunately, none of these results addresses piecewise polynomials with respect to a mesh $\Tom$. This issue is discussed in Boffi~\eal\ \cite[Lemma~2.5]{Boff_Cost_Daug_Demk_Hipt_inf_sup_Maxw_p_11} for the $p$-version finite element method on a fixed mesh. 

Piecewise polynomials on patches of tetrahedra sharing a given subsimplex (vertex, edge, or face) seem to have been addressed only more recently. Corresponding proofs employ the above-discussed results for polynomials on one element together with polynomial extension operators from the boundary of a tetrahedron (Demkowicz~\eal\ \cite{Demk_Gop_Sch_ext_II_09, Demk_Gop_Sch_ext_III_12} for, respectively, the tangential or normal trace lifting in the $\Hcrl$ or $\Hdiv$ context; \cf\ also the recent work of Falk and Winther~\cite{Falk_Wint_pol_exts_23} for a $d$-simplex). Following some early contributions as Gopalakrishnan \eal\ \cite[Lemma~3.1 and Appendix]{Gopalakrishnan2004}, Braess~\eal\ \cite{Brae_Pill_Sch_p_rob_09} address vertex stars in 2D and Ern and Vohral\'\i k~\cite[Corollaries~3.3 and~3.8]{Ern_Voh_p_rob_3D_20} consider vertex stars in 3D in the $\Hdiv$ context, whereas the $\Hcrl$ context is developed in Chaumont-Frelet et al.~\cite[Theorem~3.1]{Chaum_Ern_Voh_Maxw_22} and Chaumont-Frelet and Vohral\'\i k~\cite[Theorem~3.3 and Corollary~4.3]{Chaum_Voh_p_rob_3D_H_curl_24} (respectively edge and vertex stars in 3D). As we shall see, these results imply~\eqref{c_onto_I_disc} with $\CPd{1}$, $\CPd{2}$ being $p$-robust, but possibly depending on the number of elements in the mesh $\Tom$. Finally, simultaneous independence of the number of elements in the mesh $\Tom$ and of the polynomial degree $p$ follows from the recent result of Demkowicz and Vohral{\'{\i}}k~\cite{Demk_Voh_loc_glob_H_div_25}.

\subsection{Main results and organization of the paper}

We introduce the setting in Section~\ref{sec_not} together with a unified notation to formulate the Poincar\'e inequalities without the need to distinguish between grad, curl, and div operators.
In Section~\ref{sec_equivs} we recall that discrete Poincar\'e inequalities are equivalent with stability of discrete constrained minimization problems, discrete inf-sup conditions, and bounds on operator norms of piecewise polynomial vector potential operators. Section~\ref{sec_Poinc_cont} then wraps up known results on the continuous Poincar\'e inequalities~\eqref{c_onto0_I} and~\eqref{c_onto_I} and their variants with boundary conditions on $\partial \omega$. Turning next to the discrete Poincar\'e inequalities in Section~\ref{sec_Poinc_disc},
our main result is Theorem~\ref{thm_disc_Poinc}, establishing~\eqref{c_onto_I_disc} and its variants with boundary conditions on $\partial \omega$. In particular, we thoroughly discuss the dependencies of $\CPd{1}$, $\CPd{2}$ on the constants $\CP^1$, $\CP^2$, the shape-regularity parameter of $\Tom$, the number of elements in $\Tom$, and the polynomial degree $p$. Three different proofs, leading to various dependencies, are presented in Section~\ref{sec_dis_Poinc_proofs}, relying either on available results from the literature (invoking equivalences between discrete and continuous minimizers or stable commuting projections) or on a self-standing proof invoking piecewise Piola transformations. In Section~\ref{sec_dis_Poinc_proofs}, we also recall the equivalence of discrete Poincar\'e inequalities with the existence of graph-stable commuting projections.

\section{Setting and compact notation} \label{sec_not}

Let $\omega$ be a three-dimensional, open, bounded, connected, Lipschitz polyhedral domain with boundary
$\partial\omega$ and unit outward normal $\bn_\omega$.
Let $h_\omega$ denote the diameter of $\omega$. 
We use boldface font for vector-valued quantities, vector-valued fields, and
functional spaces composed of such fields.
For simplicity, the inner product in $L^2(\omega)$ and $\bL^2(\omega)$ is
abbreviated as $\bl\cdot,\cdot\br_\omega$, whereas the norms are written as $\|{\cdot}\|_{L^2(\omega)}$, $\|{\cdot}\|_{\bL^2(\omega)}$. 

\subsection{Sobolev spaces} \label{sec_Sob}

Let $\Hgrd \eq H^1(\omega)$ be the standard Sobolev space of scalar-valued functions from $L^2(\omega)$ with weak gradient in $\bL^2(\omega)$, $\Hcrl$ the Sobolev space of vector-valued functions from $\bL^2(\omega)$ with weak curl in $\bL^2(\omega)$, and $\Hdiv$ the Sobolev space of vector-valued functions from $\bL^2(\omega)$ with weak divergence in $L^2(\omega)$, \cf, \eg, \cite[Sections~2.2--2.3]{Gir_Rav_NS_86} and~\cite[Section~4.3]{ErnGuermondbook}.
These spaces are Hilbert spaces when equipped with the graph norms 
\begin{subequations} \begin{align}
    \|u\|_{\Hgrd}^2 & \eq \|u\|_{L^2(\omega)}^2 + h_\omega^2 \|\grad \, u\|_{\bL^2(\omega)}^2, \\
    \|\bu\|_{\Hcrl}^2 & \eq \|\bu\|_{\bL^2(\omega)}^2 + h_\omega^2 \|\curl \, \bu\|_{\bL^2(\omega)}^2, \\ 
    \|\bu\|_{\Hdiv}^2 & \eq \|\bu\|_{\bL^2(\omega)}^2 + h_\omega^2 \|\dive \, \bu\|_{L^2(\omega)}^2.
\end{align} \end{subequations}
The length scale $h_\omega$ is used for dimensional consistency (in terms of physical units) and corresponds to the scaling in the Poincar\'e inequalities~\eqref{c_onto_I} and~\eqref{c_onto_I_disc}.
We denote by $\Hgrdz \eq H^1_0(\omega)$, $\Hcrlz$, and $\Hdivz$ the subspaces with homogeneous boundary conditions imposed along $\partial \omega$ with the usual trace maps associated with the trace, the trace of the tangential component, and the trace of the normal component on $\partial\omega$. Specifically, for a smooth function or field, the trace maps are $\gamma^0_{\partial\omega}(u)=u|_{\partial\omega}$, 
$\gamma^{1}_{\partial\omega}(\bu)=\bu|_{\partial\omega}{\times}\bn_\omega$, and
$\gamma^2_{\partial\omega}(\bu)=\bu|_{\partial\omega}\SCAL\bn_\omega$.

\subsection{Mesh and piecewise polynomial spaces} \label{sec_pw_pol}

Let $\Tom$ be a triangulation of $\omega$ consisting of a finite number of tetrahedra. The shape-regularity parameter of $\Tom$ is defined as 
\begin{equation} \label{eq_rho}
    \rho_{\Tom} \eq \max_{\tau \in \Tom} h_\tau / \iota_\tau,
\end{equation}
where $h_\tau$ is the diameter of $\tau$ and $\iota_\tau$ the diameter of the largest ball inscribed in $\tau$. We also denote by $|\Tom|$ the cardinality of $\Tom$, i.e., the number of elements in $\Tom$. Let $p\ge0$ be a fixed polynomial degree. For a tetrahedron $\tau \in \Tom$, let $\pol_p(\tau)$ denote the space of polynomials of total degree at most $p$ on $\tau$, $\pol_p(\tau;\R^3)$ its vector-valued counterpart,
\begin{equation}\label{eq_Ned}
    \Ne_p(\tau)\eq\{\bu(\bx) + \bx\times \bv(\bx) : \bu,\bv\in \pol_p(\tau;\R^3)\}
\end{equation}
the $p$-th order N\'ed\'elec space~\cite{nedelec1980mixed}, and 
\begin{equation}\label{eq_RT}
    \Rt_p(\tau)\eq\{\bu(\bx)+v(\bx)\bx: \bu\in \pol_p(\tau;\R^3), v\in\pol_p(\tau)\}
\end{equation}
the $p$-th order Raviart--Thomas space~\cite{Ra_Tho_MFE_77}. We denote the broken spaces (that is, discontinuous piecewise polynomial, without any continuity requirement across the mesh interfaces) as
\begin{subequations} \label{eq:broken_spaces} \begin{align}
    \pol_{p+1}(\Tom) &\eq \{ u\lT\in L^2(\omega): u\lT|_\tau \in \pol_{p+1}(\tau) \forall \tau \in \Tom\}, \\
    \Ne_p(\Tom) &\eq  \{ \bu\lT\in \bL^2(\omega): \bu\lT|_\tau \in \Ne_p(\tau) \forall \tau \in \Tom\}, \\
    \Rt_p(\Tom) &\eq \{ \bu\lT\in \bL^2(\omega): \bu\lT|_\tau \in \Rt_p(\tau) \forall \tau \in \Tom\}. 
\end{align} \end{subequations}
The usual subspaces with continuous trace, tangential trace, and normal trace are $\pol_{p+1}(\Tom) \cap \Hgrd$, $\Ne_p(\Tom) \cap \Hcrl$, and
$\Rt_p(\Tom) \cap \Hdiv$. We proceed similarly for the ho\-mo\-ge\-ne\-ous-trace subspaces.
Here and in what follows, the subscript ${}\lT$ generically refers to functions and fields that sit in the above finite-dimensional spaces. 

\subsection{Compact notation} \label{sec_comp_not}

We introduce here a compact notation that allows us to present the subsequent developments in a unified setting.

At the continuous level, we denote
\begin{subequations} \label{eq:graph_spaces} \begin{alignat}{2}
V^0(\omega)&\eq \Hgrd, &\qquad \mV^0(\omega)&\eq \Hgrdz,\\ 
\bV^1(\omega)&\eq \Hcrl, &\qquad \mbV^1(\omega)&\eq \Hcrlz,\\ 
\bV^2(\omega)&\eq \Hdiv, &\qquad \mbV^2(\omega)&\eq \Hdivz,\\ 
V^3(\omega)&\eq L^2(\omega), &\qquad \mV^3(\omega)&\eq \mL^2(\omega)\eq\{ u\in L^2(\omega) \;:\; \bl u,1\br_\omega=0\}. 
\end{alignat} \end{subequations}
With this notation, we have the well-known de Rham sequences
\begin{subequations} \label{complexes} \begin{equation}\label{complex}
\mathbb{R}
\stackrel{\subset}{\xrightarrow{\hspace*{0.5cm}}}\
 V^0(\omega)
\stackrel{\grad}{\xrightarrow{\hspace*{0.5cm}}}\
 \bV^1(\omega)
\stackrel{\curl}{\xrightarrow{\hspace*{0.5cm}}}\
 \bV^2(\omega)
\stackrel{\dive}{\xrightarrow{\hspace*{0.5cm}}}\
V^3(\omega)
\stackrel{}{\xrightarrow{\hspace*{0.5cm}}}
0,
\end{equation}
\begin{equation}\label{complex_m}
0
\stackrel{\subset}{\xrightarrow{\hspace*{0.5cm}}}\
 \mV^0(\omega)
\stackrel{\grad}{\xrightarrow{\hspace*{0.5cm}}}\
 \mbV^1(\omega)
\stackrel{\curl}{\xrightarrow{\hspace*{0.5cm}}}\
 \mbV^2(\omega)
\stackrel{\dive}{\xrightarrow{\hspace*{0.5cm}}}\
\mV^3(\omega)
\stackrel{\int_\omega}{\xrightarrow{\hspace*{0.5cm}}}
0,
\end{equation} \end{subequations}

Similarly, at the discrete level, we denote
\begin{subequations} \label{eq:local_spaces} \begin{alignat}{2}
V_p^0(\Tom) &\eq  \pol_{p+1}(\Tom) \cap \Hgrd, &\qquad \mV_p^0(\Tom) &\eq \pol_{p+1}(\Tom) \cap \Hgrdz, \\
\bV_p^1(\Tom) &\eq  \Ne_p(\Tom) \cap \Hcrl,  &\qquad \mbV_p^1(\Tom) &\eq \Ne_p(\Tom) \cap \Hcrlz,\\
\bV_p^2(\Tom) &\eq \Rt_p(\Tom) \cap \Hdiv,  &\qquad \mbV_p^2(\Tom) &\eq \Rt_p(\Tom) \cap \Hdivz,\\
V_p^3(\Tom) &\eq \pol_{p}(\Tom),  &\qquad \mV_p^3(\Tom) &\eq \pol_{p}(\Tom) \cap \mL^2(\omega).
\end{alignat} \end{subequations}
As in~\eqref{complexes}, the discrete spaces are related by the following two discrete de Rham sequences:
\begin{subequations} \begin{equation} \label{complex_h}
\mathbb{R}
\stackrel{\subset}{\xrightarrow{\hspace*{0.5cm}}}\
 V_p^0(\Tom)
\stackrel{\grad}{\xrightarrow{\hspace*{0.5cm}}}\
  \bV_p^1(\Tom)
\stackrel{\curl}{\xrightarrow{\hspace*{0.5cm}}}\
  \bV_p^2(\Tom)
\stackrel{\dive}{\xrightarrow{\hspace*{0.5cm}}}\
 V_p^3(\Tom)
\stackrel{}{\xrightarrow{\hspace*{0.5cm}}} 0,
\end{equation}
\begin{equation} \label{complex_hm}
0 
\stackrel{\subset}{\xrightarrow{\hspace*{0.5cm}}}\
 \mV_p^0(\Tom)
\stackrel{\grad}{\xrightarrow{\hspace*{0.5cm}}}\
  \mbV_p^1 (\Tom)
\stackrel{\curl}{\xrightarrow{\hspace*{0.5cm}}}\
  \mbV_p^2(\Tom)
\stackrel{\dive}{\xrightarrow{\hspace*{0.5cm}}}\
 \mV_p^3(\Tom)
\stackrel{\int_\omega}{\xrightarrow{\hspace*{0.5cm}}}
0.
\end{equation}\end{subequations}

We henceforth use the generic notation $V^l(\omega)$, $\mV^l(\omega)$ for the continuous 
spaces defined in~\eqref{eq:graph_spaces} and $V_p^l(\Tom)$, 
$\mV_p^l(\Tom)$ for their discrete subspaces defined in~\eqref{eq:local_spaces} with $l\in\{0{:}3\}$. Moreover, we define
\begin{equation} \label{eq_dl}
    d^0\eq\grad, \quad d^1\eq\curl, \quad d^2\eq\dive.
\end{equation} 
We also use $\| \SCAL \|_{L^2(\omega)}$ to generically refer to the $L^2(\omega)$-norm or $\bL^2(\omega)$-norm of functions or fields depending on the context.

\subsection{Compact writing of Poincar\'e inequalities}

With the above notation, the continuous Poincar\'e inequalities~\eqref{c_onto0_I} and~\eqref{c_onto_I} are rewritten as follows:
\begin{alignat}{2}
\|u\|_{L^2(\omega)} &\le \CP^l h_\omega \| d^l u \|_{L^2(\omega)} \qquad && \forall u \in V^l(\omega) \text{ such that } \bl u , v\br_\omega = 0 \nn \\
& && \qquad \forall v \in V^l(\omega) \text{ with } d^l v = 0 \qquad \forall l\in\{0{:}2\} \label{c_onto_abstr} 
\end{alignat}
and the discrete Poincar\'e inequalities~\eqref{c_onto0_I_disc} and~\eqref{c_onto_I_disc} are rewritten as follows:
\begin{alignat}{2}
\|u\lT\|_{L^2(\omega)} &\le \CPd{l} h_\omega \| d^l u\lT \|_{L^2(\omega)} \qquad && \forall u\lT \in V_p^l(\Tom) \text{ such that } \bl u\lT , v\lT\br_\omega = 0 \nn \\
& && \qquad \forall v\lT \in V_p^l(\Tom) \text{ with } d^l v\lT = 0 \qquad \forall l\in\{0{:}2\}. \label{c_onto_disc_abstr}
\end{alignat}
Similar statements in the case of prescribed boundary conditions can be found in Proposition~\ref{prop_Poinc} and Theorem~\ref{thm_disc_Poinc}.


\section{Equivalent statements for discrete Poincar\'e inequalities}\label{sec_equivs}

In this section, we recall that the discrete Poincar\'e inequalities~\eqref{c_onto_I_disc}, i.e., \eqref{c_onto_disc_abstr} for $l\in\{1{:}2\}$, are equivalent to: (i) stability of discrete constrained minimization problems; (ii) discrete inf-sup conditions; and (iii) bounds on operator norms of piecewise polynomial vector potential operators. All these equivalences are known from the literature, but possibly not that well known, and definitely seldom presented together. We find it instructive to briefly recall them, including proofs. Similar equivalences hold when homogeneous boundary conditions are imposed on the boundary $\partial \omega$ and are not detailed for brevity. These equivalences only consider finite-dimensional spaces and are rather easy to expose. A further equivalence with the existence of graph-stable commuting projections includes the infinite-dimensional spaces $V^l(\omega)$ and requires a bit more setup; we postpone it to Lemma~\ref{lem_eq_stab_proj_dP} below. 

We proceed with the compact notation of Section~\ref{sec_comp_not}. 
For the reader's convenience, we also state the results explicitly for the practically relevant cases of $\Hcrl$ and $\Hdiv$. In particular, the discrete spaces $d^l (V_p^l(\Tom))$ here take the form 
\begin{subequations}
\begin{align}
d^1 (V_p^1(\Tom)) & = \curl (\Ne_p(\Tom) \cap \Hcrl) \subset \{\bv\lT \in \Rt_p(\Tom) \cap \Hdiv \text{ with } \dive \, \bv\lT = 0\}, \label{eq_curl_map}\\
d^2 (V_p^2(\Tom)) & = \dive (\Rt_p(\Tom) \cap \Hdiv) = \pol_p(\Tom). \label{eq_div_map}
\end{align}
\end{subequations}
When the boundary of $\omega$ is connected, we have more precisely
$\curl (\Ne_p(\Tom) \cap \Hcrl) = \{\bv\lT \in \Rt_p(\Tom) \cap \Hdiv \text{ with } \dive \, \bv\lT = 0\}$.

\subsection{Equivalence with stability of discrete constrained minimization problems}

Let $l\in\{1{:}2\}$, $r\lT \in d^l (V_p^l(\Tom))$, and consider the {\em constrained quadratic minimization problem} 
\begin{equation} \label{eq_min}
u\lT^* \eq \Argmin_{\substack{v\lT \in V_p^l(\Tom) \\ d^l v\lT = r\lT}} \|v\lT\|_{L^2(\omega)}^2, 
\end{equation}
Since the minimization set is closed, convex, and nonempty as we suppose $r\lT \in d^l (V_p^l(\Tom))$ and the minimized functional is continuous and strongly convex, the above problem has a unique solution.

\begin{lemma}[Equivalence of~\eqref{c_onto_disc_abstr} with stability of discrete constrained minimization in $V_p^l(\Tom)$] \label{lem_stab}
The discrete Poincar\'e inequalities~\eqref{c_onto_disc_abstr} are equivalent to the stability of~\eqref{eq_min} in the sense that
\begin{equation}\label{eq_stab}
    \|u\lT^*\|_{L^2(\omega)} \le \CPd{l} h_\omega \| r\lT \|_{L^2(\omega)} \qquad \forall l\in\{1{:}2\}.
\end{equation}
\end{lemma}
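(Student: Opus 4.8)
The plan is to reduce the lemma to a classical variational characterization of the constrained minimizer in~\eqref{eq_min}: once this is in hand, the set of fields appearing in the discrete Poincar\'e inequality~\eqref{c_onto_disc_abstr} is \emph{exactly} the set of minimizers $u\lT^*$ of~\eqref{eq_min} (as the datum $r\lT$ varies), so that~\eqref{c_onto_disc_abstr} and~\eqref{eq_stab} become literally the same family of inequalities, written with two different parametrizations, and in particular hold with the same constant.

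First I would record the first-order optimality condition for~\eqref{eq_min}. Write $Z_p^l \eq \{w\lT \in V_p^l(\Tom) : d^l w\lT = 0\}$ for the discrete kernel. The admissible set $\{v\lT \in V_p^l(\Tom) : d^l v\lT = r\lT\}$ is a nonempty (by the assumption $r\lT \in d^l(V_p^l(\Tom))$) affine subspace of $V_p^l(\Tom)$ with direction space $Z_p^l$. Perturbing the minimizer $u\lT^*$ by $t w\lT$ with $w\lT \in Z_p^l$ keeps it admissible, so differentiating $t \mapsto \|u\lT^* + t w\lT\|_{L^2(\omega)}^2$ at $t = 0$ gives $\bl u\lT^*, w\lT\br_\omega = 0$ for all $w\lT \in Z_p^l$; conversely, for any admissible $v\lT$ one has $v\lT - u\lT^* \in Z_p^l$, and the Pythagorean identity $\|v\lT\|_{L^2(\omega)}^2 = \|u\lT^*\|_{L^2(\omega)}^2 + \|v\lT - u\lT^*\|_{L^2(\omega)}^2$ shows this orthogonality characterizes $u\lT^*$ uniquely. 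Hence $u\lT^*$ is precisely the element of $V_p^l(\Tom)$ that satisfies $d^l u\lT^* = r\lT$ and is $L^2(\omega)$-orthogonal to $Z_p^l$.

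Next I would set up the dictionary between the two statements. Starting from any $u\lT \in V_p^l(\Tom)$ that is $L^2(\omega)$-orthogonal to $Z_p^l$ — i.e.\ a field of the type quantified over in~\eqref{c_onto_disc_abstr} — I would take $r\lT \eq d^l u\lT \in d^l(V_p^l(\Tom))$; then $u\lT$ is admissible for~\eqref{eq_min} and orthogonal to $Z_p^l$, hence $u\lT = u\lT^*$ by the characterization above, while $\|d^l u\lT\|_{L^2(\omega)} = \|r\lT\|_{L^2(\omega)}$. In the other direction, any minimizer $u\lT^*$ arising in~\eqref{eq_min} is orthogonal to $Z_p^l$ and satisfies $d^l u\lT^* = r\lT$, hence is of the type quantified over in~\eqref{c_onto_disc_abstr}. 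Under this identification $\|u\lT\|_{L^2(\omega)} = \|u\lT^*\|_{L^2(\omega)}$, so~\eqref{c_onto_disc_abstr} and~\eqref{eq_stab} are the same collection of inequalities and therefore hold simultaneously with the identical constant $\CPd{l}$; both implications follow at once.

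I do not expect a genuine obstacle here: the only point requiring a (short) argument is the orthogonal-projection characterization of the constrained minimizer, which is the standard Hilbert-space fact about projecting the origin onto an affine subspace, and once it is stated the equivalence is essentially tautological. The single thing to be careful about while writing is to line up the two parametrizations so that the \emph{same} constant appears on both sides, which is exactly the content of the lemma.
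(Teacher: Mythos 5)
Your proposal is correct and follows essentially the same route as the paper: both identify the Euler optimality conditions of~\eqref{eq_min} with the orthogonality constraint in~\eqref{c_onto_disc_abstr}, so that the minimizers $u\lT^*$ (as $r\lT$ varies over $d^l(V_p^l(\Tom))$) coincide with the fields quantified over in the discrete Poincar\'e inequality, and the two statements become the same family of inequalities with the same constant. Your extra detail on the Pythagorean characterization of the minimizer is a slightly more explicit version of the paper's appeal to the Euler conditions and uniqueness, but it is not a different argument.
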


\begin{proof}
The Euler optimality conditions for~\eqref{eq_min} allow for the following equivalent rewriting of~\eqref{eq_min}:
\begin{equation}\label{eq_min_EL} \left \{ \begin{aligned}
&\text{Find } u\lT^* \in V_p^l(\Tom) \text{ with } d^l u\lT^* = r\lT \text{ such that }\\
&\bl u\lT^*, v\lT\br_\omega = 0 \quad \forall v\lT \in V_p^l(\Tom) \text{ with } d^l v\lT = 0.
\end{aligned} \right.
\end{equation}
Thus, \eqref{c_onto_disc_abstr} readily implies~\eqref{eq_stab}. Conversely, if~\eqref{eq_stab} holds, given any $u\lT \in V_p^l(\Tom)$ satisfying the orthogonality constraints in~\eqref{c_onto_disc_abstr}, one considers the constrained minimization problem~\eqref{eq_min} with the datum $r\lT \eq d^l u\lT$. Since $u\lT^*=u\lT$ by uniqueness, \eqref{eq_stab} implies~\eqref{c_onto_disc_abstr}.
\end{proof}

In the two cases $l\in\{1{:}2\}$, the minimization~\eqref{eq_min} writes, for $\bbr\lT \in \curl (\Ne_p(\Tom) \cap \Hcrl)$ and $r\lT \in \pol_p(\Tom)$, respectively, as  
\begin{equation} \label{eq_H_curl_div_min}
    \bu\lT^* = \Argmin_{\substack{\bv\lT \in \Ne_p(\Tom) \cap \Hcrl \\ \curl \, \bv\lT = \bbr\lT}} \|\bv\lT\|_{\bL^2(\omega)}^2 
    \, \text{ and } \,
    \bu\lT^* = \Argmin_{\substack{\bv\lT \in \Rt_p(\Tom) \cap \Hdiv \\ \dive \, \bv\lT = r\lT}} \|\bv\lT\|_{\bL^2(\omega)}^2.
\end{equation}
Lemma~\ref{lem_stab} then states that the discrete Poincar\'e inequalities~\eqref{c_onto_I_disc}
are equivalent to the stabilities
\begin{equation}\label{eq_H_curl_div_stab}
    \|\bu\lT^*\|_{\bL^2(\omega)} \le \CPd{1} h_\omega \| \bbr\lT \|_{\bL^2(\omega)} 
    \, \text{ and } \,
    \|\bu\lT^*\|_{\bL^2(\omega)} \le \CPd{2} h_\omega \| r\lT \|_{L^2(\omega)}.
\end{equation}

\subsection{Equivalence with discrete inf-sup conditions} \label{sec:equiv_inf_sup}

Let $l\in\{1{:}2\}$, $r\lT \in d^l (V_p^l(\Tom))$ and consider the following problem:
\begin{equation}\label{eq_min_EL_Lag} \left \{ \begin{alignedat}{4}
&\text{Find } u\lT^* \in V_p^l(\Tom) \text{ and } s\lT^* & & \in d^l (V_p^l(\Tom)) \text{ such} && \text{ that }\\
&\bl u\lT^*, v\lT\br_\omega - \bl s\lT^*, d^l v\lT\br_\omega & & = 0 &&\forall v\lT \in V_p^l(\Tom), \\
& \bl d^l u\lT^*, t\lT\br_\omega & & = \bl r\lT, t\lT\br_\omega &&\forall t\lT \in d^l (V_p^l(\Tom)),
\end{alignedat} \right.\end{equation}
which is called a mixed formulation of~\eqref{eq_min_EL}. As the differential operator $d^l$ is surjective from $V_p^l(\Tom)$ onto $d^l (V_p^l(\Tom))$ by definition, the Euler conditions~\eqref{eq_min_EL} are equivalent to the mixed formulation~\eqref{eq_min_EL_Lag}.
We now recall that the discrete Poincar\'e inequalities~\eqref{c_onto_disc_abstr} are equivalent to the {\em discrete inf-sup conditions}
\begin{equation}\label{eq_inf_sup}
\inf_{t\lT \in d^l (V_p^l(\Tom))} \,\, \sup_{v\lT \in V_p^l(\Tom)} \frac{\bl t\lT, d^l v\lT\br_\omega}{\|t\lT\|_{L^2(\omega)}\|v\lT\|_{L^2(\omega)}} \geq \frac{1}{\CPd{l}h_\omega} \qquad \forall l\in\{1{:}2\}.
\end{equation}

\begin{lemma}[Equivalence of~\eqref{c_onto_disc_abstr} with the discrete inf-sup conditions]\label{lem_IS} 
The discrete Poincar\'e inequalities~\eqref{c_onto_disc_abstr} are equivalent to the discrete inf-sup conditions~\eqref{eq_inf_sup}.
\end{lemma}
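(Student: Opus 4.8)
The plan is to prove the two implications of the equivalence separately, keeping track of the constant $\CPd{l}$ so that Lemma~\ref{lem_IS} holds with matching constants on both sides. Two elementary facts will be used throughout. First, by the Euler characterization~\eqref{eq_min_EL}, an element $u\lT \in V_p^l(\Tom)$ satisfies the orthogonality constraint of~\eqref{c_onto_disc_abstr} if and only if it is the (unique) minimizer of~\eqref{eq_min} for the datum $r\lT \eq d^l u\lT$; in particular such a $u\lT$ lies in the $L^2(\omega)$-orthogonal complement of $\ker(d^l)\cap V_p^l(\Tom)$ inside $V_p^l(\Tom)$. Second, for any $w\lT \in V_p^l(\Tom)$ one has $\sup_{v\lT \in V_p^l(\Tom)} \bl w\lT, v\lT\br_\omega / \|v\lT\|_{L^2(\omega)} = \|w\lT\|_{L^2(\omega)}$ (Cauchy--Schwarz, with equality for $v\lT = w\lT$).

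\emph{From~\eqref{c_onto_disc_abstr} to~\eqref{eq_inf_sup}.} Fix a nonzero $t\lT \in d^l(V_p^l(\Tom))$ and let $u\lT^*$ solve~\eqref{eq_min} with datum $r\lT \eq t\lT$. By~\eqref{eq_min_EL}, $u\lT^*$ obeys the orthogonality constraint of~\eqref{c_onto_disc_abstr}, so $\|u\lT^*\|_{L^2(\omega)} \le \CPd{l} h_\omega \|d^l u\lT^*\|_{L^2(\omega)} = \CPd{l} h_\omega \|t\lT\|_{L^2(\omega)}$ and, in particular, $u\lT^* \ne 0$. Taking $v\lT = u\lT^*$ in the supremum of~\eqref{eq_inf_sup},
\[ \sup_{v\lT \in V_p^l(\Tom)} \frac{\bl t\lT, d^l v\lT\br_\omega}{\|v\lT\|_{L^2(\omega)}} \ge \frac{\bl t\lT, t\lT\br_\omega}{\|u\lT^*\|_{L^2(\omega)}} = \frac{\|t\lT\|_{L^2(\omega)}^2}{\|u\lT^*\|_{L^2(\omega)}} \ge \frac{\|t\lT\|_{L^2(\omega)}}{\CPd{l} h_\omega}; \]
dividing by $\|t\lT\|_{L^2(\omega)}$ and taking the infimum over (nonzero) $t\lT$ yields~\eqref{eq_inf_sup}.

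\emph{From~\eqref{eq_inf_sup} to~\eqref{c_onto_disc_abstr}.} The delicate point is that~\eqref{eq_inf_sup} a priori only controls norms of elements of the range $d^l(V_p^l(\Tom))$, whereas~\eqref{c_onto_disc_abstr} is a bound on the primal function $u\lT$; I would bridge this with an auxiliary multiplier. Let $u\lT \in V_p^l(\Tom)$ satisfy the orthogonality constraint of~\eqref{c_onto_disc_abstr}; we may assume $u\lT \ne 0$. The functional $v\lT \mapsto \bl u\lT, v\lT\br_\omega$ vanishes on $\ker(d^l) \cap V_p^l(\Tom)$, hence, since $d^l$ is onto $d^l(V_p^l(\Tom))$, it factors through $d^l$: there is $s\lT^* \in d^l(V_p^l(\Tom))$ with $\bl u\lT, v\lT\br_\omega = \bl s\lT^*, d^l v\lT\br_\omega$ for all $v\lT \in V_p^l(\Tom)$ (this is the relation satisfied by the multiplier in the mixed formulation~\eqref{eq_min_EL_Lag}). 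Applying~\eqref{eq_inf_sup} to $s\lT^*$ and then this relation,
\[ \|s\lT^*\|_{L^2(\omega)} \le \CPd{l} h_\omega \sup_{v\lT \in V_p^l(\Tom)} \frac{\bl s\lT^*, d^l v\lT\br_\omega}{\|v\lT\|_{L^2(\omega)}} = \CPd{l} h_\omega \sup_{v\lT \in V_p^l(\Tom)} \frac{\bl u\lT, v\lT\br_\omega}{\|v\lT\|_{L^2(\omega)}} = \CPd{l} h_\omega \|u\lT\|_{L^2(\omega)}. \]
Finally, testing the relation with $v\lT = u\lT$ gives, by Cauchy--Schwarz and the previous bound, $\|u\lT\|_{L^2(\omega)}^2 = \bl s\lT^*, d^l u\lT\br_\omega \le \|s\lT^*\|_{L^2(\omega)}\|d^l u\lT\|_{L^2(\omega)} \le \CPd{l} h_\omega \|u\lT\|_{L^2(\omega)}\|d^l u\lT\|_{L^2(\omega)}$, whence~\eqref{c_onto_disc_abstr} after cancelling $\|u\lT\|_{L^2(\omega)}$.

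I expect the main obstacle to be exactly this second implication: one must pick the right dual object $s\lT^*$ and verify that a single test of the multiplier relation with $v\lT = u\lT$ makes the two $L^2(\omega)$-norms close up with the \emph{same} constant $\CPd{l}$ (rather than, say, a squared or doubled constant). A shorter alternative would establish instead the stability estimate~\eqref{eq_stab} --- \eqref{eq_inf_sup} is precisely the Brezzi inf-sup condition for the uniquely solvable saddle-point problem~\eqref{eq_min_EL_Lag}, whose solution then satisfies $\|u\lT^*\|_{L^2(\omega)} \le \CPd{l} h_\omega \|r\lT\|_{L^2(\omega)}$ by the same two-line computation --- and then invoke Lemma~\ref{lem_stab}; I would mention this route but keep the self-contained argument above. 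The variants with homogeneous boundary conditions on $\partial\omega$ go through verbatim.
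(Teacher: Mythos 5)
Your proof is correct and follows essentially the same route as the paper's: the forward direction is the same computation (testing the supremum with the constrained minimizer), and the backward direction uses the same Lagrange multiplier $s\lT^*$ and the same chain of Cauchy--Schwarz and inf-sup bounds, merely phrased directly in terms of $u\lT$ rather than routed through the stability statement~\eqref{eq_stab} of Lemma~\ref{lem_stab} and the mixed system~\eqref{eq_min_EL_Lag}. The only cosmetic remark is that in the forward direction $u\lT^*\neq 0$ follows from $d^l u\lT^*=t\lT\neq \bzero$ rather than from the Poincar\'e bound itself.
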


\begin{proof} 
Let $l\in\{1{:}2\}$.
Since~\eqref{c_onto_disc_abstr} is equivalent to the stability property~\eqref{eq_stab} as per Lemma~\ref{lem_stab}, we prove the equivalence between~\eqref{eq_stab} and~\eqref{eq_inf_sup}. 

(1) Assume the stability property~\eqref{eq_stab}. 
Let $t\lT \in d^l (V_p^l(\Tom))$. Consider, as in~\eqref{eq_min_EL}, the following well-posed problem:
\begin{equation*} \left \{ \begin{aligned}
&\text{Find }v\lT \in V_p^l(\Tom) \text{ with } d^l v\lT = t\lT \text{ such that }\\
&\bl v\lT, w\lT\br_\omega = 0 \quad \forall w\lT \in V_p^l(\Tom) \text{ with } d^l w\lT = 0.
\end{aligned} \right.\end{equation*}
The stability property~\eqref{eq_stab} gives 
$\|v\lT\|_{L^2(\omega)} \le \CPd{l} h_\omega \| t\lT \|_{L^2(\omega)}.$
Now, since $d^l v\lT = t\lT$, we infer from this bound that
\[
    \bl t\lT, d^l v\lT\br_\omega = \|t\lT\|_{L^2(\omega)}^2 \geq \frac{\|v\lT\|_{L^2(\omega)} \|t\lT\|_{L^2(\omega)}}{\CPd{l} h_\omega},
\]
which gives the discrete inf-sup condition~\eqref{eq_inf_sup}. 

(2) Conversely, we now suppose~\eqref{eq_inf_sup} and show that this implies~\eqref{eq_stab}. Let $r\lT \in d^l (V_p^l(\Tom))$ and let $u\lT^*$ solve~\eqref{eq_min}. Since~\eqref{eq_min} is equivalent to~\eqref{eq_min_EL} which is in turn equivalent to~\eqref{eq_min_EL_Lag}, we can consider $s\lT^*\in d^l (V_p^l(\Tom))$ so that the pair $(u\lT^*,s\lT^*)$ solves~\eqref{eq_min_EL_Lag}. Using in~\eqref{eq_min_EL_Lag} the test functions $v\lT = u\lT^*$ and $t\lT = s\lT^*$ and summing the two equations, we infer that
\[
    \|u\lT^*\|_{L^2(\omega)}^2 = \bl r\lT, s\lT^*\br_\omega \leq \|r\lT\|_{L^2(\omega)} \|s\lT^*\|_{L^2(\omega)},
\]
where we used the Cauchy--Schwarz inequality. Now, the discrete inf-sup condition~\eqref{eq_inf_sup} gives the existence of $v\lT \in V_p^l(\Tom)$ such that
\[
    \|s^*\lT\|_{L^2(\omega)} \leq \CPd{l}h_\omega \frac{\bl s\lT^*, d^l v\lT\br_\omega}{\|v\lT\|_{L^2(\omega)}}.
\]
From the first equation in~\eqref{eq_min_EL_Lag} and the Cauchy--Schwarz inequality, we obtain
\[
    \frac{\bl s\lT^*, d^l v\lT\br_\omega}{\|v\lT\|_{L^2(\omega)}} = \frac{\bl u\lT^*, v\lT\br_\omega}{\|v\lT\|_{L^2(\omega)}} \leq \|u\lT^*\|_{L^2(\omega)}.
\]
Combining the three above inequalities, \eqref{eq_stab} follows.
\end{proof}

In the two cases $l\in\{1{:}2\}$, the discrete inf-sup conditions~\eqref{eq_inf_sup} respectively write as
\begin{equation}\label{eq_H_curl_inf_sup}
\inf_{\bt\lT \in \curl (\Ne_p(\Tom) \cap \Hcrl)} \,\, \sup_{\bv\lT \in \Ne_p(\Tom) \cap \Hcrl} \frac{\bl \bt\lT, \curl \, \bv\lT\br_\omega}{\|\bt\lT\|_{\bL^2(\omega)}\|\bv\lT\|_{\bL^2(\omega)}} \geq \frac{1}{\CPd{1}h_\omega}
\end{equation}
and
\begin{equation}\label{eq_H_div_inf_sup}
\inf_{t\lT \in \pol_{p}(\Tom)} \,\, \sup_{\bv\lT \in \Rt_p(\Tom) \cap \Hdiv} \frac{\bl t\lT, \dive \, \bv\lT\br_\omega}{\|t\lT\|_{L^2(\omega)}\|\bv\lT\|_{\bL^2(\omega)}} \geq \frac{1}{\CPd{2}h_\omega}.
\end{equation}
By Lemma~\ref{lem_IS}, they are equivalent to the discrete Poincar\'e inequalities~\eqref{c_onto_I_disc}.

\begin{remark}[Norms]
We stress that we do not use here the norms for which the spaces are Hilbert spaces, but merely $L^2(\omega)$- or $\bL^2(\omega)$-norms, in contrast to the usual practice, see, \eg, \cite[Theorem~4.2.3]{BBF13} or~\cite[Theorem~49.13]{EG_volII}, but similarly to, \eg, \cite[Theorem~5.9]{Voh_un_apr_apost_MFE_10} or~\cite[Remark 51.12]{EG_volII}. 
\end{remark}

\subsection{Equivalence with bounds on operator norms of piecewise polynomial vector potential operators}

Let $l\in\{1{:}2\}$ and $r\lT \in d^l (V_p^l(\Tom))$. 
A piecewise polynomial vector potential is any field $\Phi^l\lT(r\lT) \in V_p^l(\Tom)$ such that $d^l (\Phi^l\lT(r\lT)) = r\lT$, and we say that
\begin{equation}
    \Phi^l\lT: d^l (V_p^l(\Tom)) \rightarrow V_p^l(\Tom)
\end{equation}
is a {\em piecewise polynomial vector potential operator} (piecewise polynomial right-inverse of the $d^l$ operator, also called Poincar\'e map). We are particularly interested in the $L^2(\omega)$-norm minimizing operators
\begin{equation} \label{eq_pot_op}
  \Phi^{l,*}\lT(r\lT) \eq \Argmin_{\substack{v\lT \in V_p^l(\Tom) \\ d^l v\lT = r\lT}} \|v\lT\|_{L^2(\omega)}^2,
\end{equation}
with operator norm
\begin{equation} \label{eq_op_norm}
    \bigtnorm{\Phi^{l,*}\lT} \eq \max_{v\lT \in V_p^l(\Tom)} \frac{\|\Phi^{l,*}\lT(d^l v\lT)\|_{L^2(\omega)}}{\|d^l v\lT\|_{L^2(\omega)}}.
\end{equation}

\begin{lemma}[Equivalence of the best constant in~\eqref{c_onto_disc_abstr} with the operator norm of the minimal discrete vector potential operator]\label{lem_op_norm}
The operator norm $\bigtnorm{\Phi^{l,*}\lT}$ from~\eqref{eq_op_norm} {\em equals} the best discrete Poincar\'e inequality constant $\CPd{l} h_\omega$ from~\eqref{c_onto_disc_abstr}.
\end{lemma}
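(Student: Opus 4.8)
The plan is to show a two-sided inequality between $\bigtnorm{\Phi^{l,*}\lT}$ and the best constant $\CPd{l}h_\omega$ in~\eqref{c_onto_disc_abstr}, which forces equality. The key observation is that the minimizer $\Phi^{l,*}\lT(r\lT)$ defined in~\eqref{eq_pot_op} is \emph{exactly} the solution $u\lT^*$ of the constrained minimization problem~\eqref{eq_min} for the datum $r\lT$, and hence, by the Euler conditions~\eqref{eq_min_EL} already established in the proof of Lemma~\ref{lem_stab}, it is characterized as the unique element of $V_p^l(\Tom)$ with $d^l u\lT^* = r\lT$ that is $L^2(\omega)$-orthogonal to $\ker(d^l) \cap V_p^l(\Tom)$. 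So the content of the lemma is really that the optimal Poincar\'e constant for the subspace of orthogonal functions equals the operator norm of this particular right-inverse.

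First I would argue $\bigtnorm{\Phi^{l,*}\lT} \le \CPd{l}h_\omega$, where $\CPd{l}h_\omega$ denotes the \emph{best} (smallest) constant making~\eqref{c_onto_disc_abstr} hold. Take any $v\lT \in V_p^l(\Tom)$ and set $r\lT \eq d^l v\lT \in d^l(V_p^l(\Tom))$. Then $u\lT^* \eq \Phi^{l,*}\lT(r\lT)$ satisfies the orthogonality constraint in~\eqref{c_onto_disc_abstr} by~\eqref{eq_min_EL}, so applying~\eqref{c_onto_disc_abstr} to $u\lT^*$ and using $d^l u\lT^* = r\lT = d^l v\lT$ gives $\|\Phi^{l,*}\lT(d^l v\lT)\|_{L^2(\omega)} = \|u\lT^*\|_{L^2(\omega)} \le \CPd{l}h_\omega \|d^l u\lT^*\|_{L^2(\omega)} = \CPd{l}h_\omega\|d^l v\lT\|_{L^2(\omega)}$. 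Taking the maximum over $v\lT$ in~\eqref{eq_op_norm} yields the bound (one should note the max is over $v\lT$ with $d^l v\lT \neq 0$, and when $d^l v\lT = 0$ the minimizer $u\lT^*$ is $\bzero$, so the ratio is well defined).

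Next I would argue the reverse inequality, i.e.\ that $\bigtnorm{\Phi^{l,*}\lT}$ is itself an admissible Poincar\'e constant in~\eqref{c_onto_disc_abstr}, which by minimality of $\CPd{l}h_\omega$ gives $\CPd{l}h_\omega \le \bigtnorm{\Phi^{l,*}\lT}$. Let $u\lT \in V_p^l(\Tom)$ satisfy the orthogonality constraint $\bl u\lT, v\lT\br_\omega = 0$ for all $v\lT \in V_p^l(\Tom)$ with $d^l v\lT = 0$. Set $r\lT \eq d^l u\lT$; then $u\lT$ satisfies exactly the characterization~\eqref{eq_min_EL} of $u\lT^* = \Phi^{l,*}\lT(r\lT)$, so by uniqueness $u\lT = \Phi^{l,*}\lT(d^l u\lT)$. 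Hence $\|u\lT\|_{L^2(\omega)} = \|\Phi^{l,*}\lT(d^l u\lT)\|_{L^2(\omega)} \le \bigtnorm{\Phi^{l,*}\lT}\|d^l u\lT\|_{L^2(\omega)}$ directly from the definition~\eqref{eq_op_norm} of the operator norm. Combining the two inequalities gives $\bigtnorm{\Phi^{l,*}\lT} = \CPd{l}h_\omega$.

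There is no real obstacle here: the only thing to be careful about is bookkeeping around the case $d^l v\lT = \bzero$ in the definition of the operator norm, and making precise that ``$\CPd{l}h_\omega$ from~\eqref{c_onto_disc_abstr}'' means the \emph{best} constant (otherwise only one inequality survives). Everything else follows from the Euler characterization~\eqref{eq_min_EL} and uniqueness of the constrained minimizer, both already in hand from Lemma~\ref{lem_stab}. Indeed the statement is essentially a restatement of Lemma~\ref{lem_stab} with $\Phi^{l,*}\lT$ made explicit, so the proof is short.
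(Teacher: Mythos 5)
Your proof is correct and follows essentially the same route as the paper: the paper's own proof is the one-line observation that $\Phi^{l,*}\lT$ in~\eqref{eq_pot_op} coincides with the constrained minimizer of~\eqref{eq_min}, so the claim reduces to Lemma~\ref{lem_stab}, which is exactly the content you spell out via the Euler characterization~\eqref{eq_min_EL} and the two-sided inequality. Your added care about the case $d^l v\lT=0$ and about ``best constant'' is a reasonable elaboration but not a departure from the paper's argument.
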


\begin{proof}
Observe that~\eqref{eq_pot_op} matches exactly the form of the constrained minimization~\eqref{eq_min} and use Lemma~\ref{lem_stab}. 
\end{proof}

In the $\Ne_p(\Tom) \cap \Hcrl$ setting, the $\bL^2(\omega)$-norm minimizing potential operator is
\begin{equation} \label{eq_H_curl_pot_op}
  \Phi^{\crl,*}\lT(\bbr\lT) \eq \Argmin_{\substack{\bv\lT \in \Ne_p(\Tom) \cap \Hcrl \\ \curl \, \bv\lT = \bbr\lT}} \|\bv\lT\|_{\bL^2(\omega)}^2,
\end{equation}
and its operator norm is
\begin{equation} \label{eq_op_norm_Hc}
    \bigtnorm{\Phi^{\crl,*}\lT} \eq \max_{\bv\lT \in \Ne_p(\Tom) \cap \Hcrl} \frac{\|\Phi^{\crl,*}\lT(\curl \, \bv\lT)\|_{\bL^2(\omega)}}{\|\curl \, \bv\lT\|_{\bL^2(\omega)}}.
\end{equation}
In the $\Rt_p(\Tom) \cap \Hdiv$ setting, the $\bL^2(\omega)$-norm minimizing potential operator is
\begin{equation} \label{eq_H_div_pot_op}
    \Phi^{\dive,*}\lT(r\lT) \eq \Argmin_{\substack{\bv\lT \in \Rt_p(\Tom) \cap \Hdiv \\ \dive \, \bv\lT = r\lT}} \|\bv\lT\|_{\bL^2(\omega)}^2,
\end{equation}
with operator norm 
\begin{equation} \label{eq_op_norm_Hdv}
\bigtnorm{\Phi^{\dive,*}\lT} \eq \max_{\bv\lT \in \Rt_p(\Tom) \cap \Hdiv} \frac{\|\Phi^{\dive,*}\lT(\dive \, \bv\lT)\|_{\bL^2(\omega)}}{\|\dive \, \bv\lT\|_{L^2(\omega)}}.
\end{equation}
By Lemma~\ref{lem_op_norm}, these operator norms are equivalent to the best constants $\CPd{l} h_\omega$ in the discrete Poincar\'e inequalities~\eqref{c_onto_I_disc}.

\section{Continuous Poincar\'e inequalities} \label{sec_Poinc_cont}

In this section, we state the continuous Poincar\'e inequalities and give some 
pointers to the literature for bounds on the continuous Poincar\'e constants. This will pave the way to our main topic, the discrete Poincar\'e inequalities.

\begin{proposition}[Continuous Poincar\'e inequalities] \label{prop_Poinc}
\textup{(i)} Continuous Poincar\'e inequalities without boundary conditions: There exist constants $\CP^l$, $l\in\{0{:}2\}$, only depending on the shape of $\omega$, such that
\begin{alignat}{2}
\|u\|_{L^2(\omega)} &\le \CP^l h_\omega \| d^l u \|_{L^2(\omega)} \qquad && \forall u \in V^l(\omega) \text{ such that } \bl u , v\br_\omega = 0 \nn \\
& && \qquad \forall v \in V^l(\omega) \text{ with } d^l v = 0 \qquad \forall l\in\{0{:}2\}. \label{c_onto}
\end{alignat}
\textup{(ii)} Continuous Poincar\'e inequalities with boundary conditions: There exist constants $\mCP^l$, $l\in\{0{:}2\}$, only depending on the shape of $\omega$, such that
\begin{alignat}{2}
\|u\|_{L^2(\omega)} &\le \mCP^l h_\omega \| d^l u \|_{L^2(\omega)} \qquad && \forall u \in \mV^l(\omega) \text{ such that } \bl u , v\br_\omega = 0 \nn \\
& && \qquad \forall v \in \mV^l(\omega) \text{ with } d^l v = 0 \qquad \forall l\in\{0{:}2\}. \label{c_ontom}
\end{alignat}
\end{proposition}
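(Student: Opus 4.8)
The plan is to give one functional-analytic argument valid for all $l\in\{0{:}2\}$ and for both~\eqref{c_onto} and~\eqref{c_ontom}, and then to record the case-specific compactness input. First I would observe that the orthogonality constraint in~\eqref{c_onto} asks precisely that $u\in V^l(\omega)$ be $L^2(\omega)$-orthogonal to the kernel $\ker d^l\eq\{v\in V^l(\omega):d^l v=0\}$, and similarly for~\eqref{c_ontom} with $\mV^l(\omega)$; here $d^l$ is the densely defined operator from $V^l(\omega)$ (resp. $\mV^l(\omega)$) into $L^2(\omega)$, which is bounded for the graph norm on its domain and therefore closed, so that $\ker d^l$ is $L^2(\omega)$-closed. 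Consequently the inequality is equivalent to $d^l$ having closed range: if $\operatorname{ran}d^l$ is closed, then the restriction of $d^l$ to the closed subspace $(\ker d^l)^\perp\cap V^l(\omega)$, equipped with the graph norm, is a continuous bijection onto the Banach space $\operatorname{ran}d^l$, hence has a bounded inverse by the open mapping theorem, which yields $\|u\|_{L^2(\omega)}\le C\|d^l u\|_{L^2(\omega)}$ for every $u$ satisfying the orthogonality constraint. To make the constant dimensionless and independent of the size of $\omega$, I would run this on the rescaled domain $\hat\omega\eq h_\omega^{-1}\omega$ of unit diameter: the pull-backs of the $L^2(\omega)$-norm and of $h_\omega\|d^l\cdot\|_{L^2(\omega)}$ scale by the same power of $h_\omega$, so the resulting constant $\CP^l$ (resp. $\mCP^l$) depends only on $\hat\omega$, i.e., on the shape of $\omega$.

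It then remains to supply closedness of $\operatorname{ran}d^l$ in each case. For $l=0$, $\ker d^0$ is the space of constants (intersected with $\Hgrdz$ in the boundary-condition case, hence reduced to $\{0\}$), and closedness of $\operatorname{ran}\grad$ is the classical Poincar\'e--Wirtinger / Poincar\'e--Friedrichs inequality, obtained from the compact Rellich--Kondrachov embedding $H^1(\omega)\hookrightarrow\hookrightarrow L^2(\omega)$ on bounded Lipschitz domains via the usual contradiction argument; the sharp value $\CP^0\le 1/\pi$ for convex $\omega$ would be quoted from~\cite{Pay_Wei_Poin_conv_60,Beben_Poin_conv_03}. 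For $l=2$, I would avoid any further compactness by reduction to $l=0$: given $\bu\in\Hdiv$, solve the homogeneous Dirichlet Laplace problem $-\Delta\phi=\dive\,\bu$ and set $\bw\eq-\grad\,\phi\in\Hdiv$, so that $\dive\,\bw=\dive\,\bu$ and, testing the weak form with $\phi$ and using the $l=0$ Friedrichs inequality, $\|\bw\|_{\bL^2(\omega)}\le C h_\omega\|\dive\,\bu\|_{L^2(\omega)}$; since $\bu-\bw$ is divergence-free, any $\bu$ obeying the orthogonality constraint satisfies $\|\bu\|_{\bL^2(\omega)}^2=\bl\bu,\bw\br_\omega\le\|\bu\|_{\bL^2(\omega)}\|\bw\|_{\bL^2(\omega)}$, which gives~\eqref{c_onto2_I}; the boundary-condition variant uses that $\dive\,\bu$ then has zero mean and replaces the Dirichlet problem by a Neumann one, again controlled by the $l=0$ estimate. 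For $l=1$, the inequality is the Poincar\'e--Friedrichs--Weber inequality; I would obtain closedness of $\operatorname{ran}\curl$ from the Hodge/Helmholtz decomposition together with Weber's compact embedding of $\Hcrl\cap\Hdivz$ (and of its companion with the complementary boundary condition) into $\bL^2(\omega)$ on bounded Lipschitz domains, following Fernandes and Gilardi~\cite[Proposition 7.4]{Fer_Gil_Maxw_BC_97} and Chaumont-Frelet et al.~\cite[Theorem A.1]{Chaum_Ern_Voh_Maxw_22}, to which I would refer for the details.

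The main obstacle is the case $l=1$: in contrast to $l=0$ (direct Rellich) and $l=2$ (reduced to $l=0$), closedness of $\operatorname{ran}\curl$ genuinely relies on the vector-valued selection theorem of Weck/Weber/Picard, whose proof on general Lipschitz domains is itself nontrivial, and the argument must accommodate the cohomology of $\omega$ --- the harmonic Dirichlet/Neumann fields --- so that the orthogonality is exploited against the \emph{entire} kernel of $\curl$ in $V^1(\omega)$, not merely against gradients of $H^1$ functions. I would emphasize that writing the constraints in~\eqref{c_onto}--\eqref{c_ontom} as ``$\bl u,v\br_\omega=0$ for all $v\in V^l(\omega)$ (resp. $\mV^l(\omega)$) with $d^l v=0$'' is exactly what renders the statement independent of the topology of $\omega$, so that no hypothesis beyond connectedness and Lipschitz regularity is required. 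A secondary point, already taken care of by the dilation step, is to certify that $\CP^l$ and $\mCP^l$ depend on the shape of $\omega$ only; quantitative bounds on these constants in terms of explicit geometric quantities are not pursued and are only referenced.
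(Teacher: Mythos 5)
Your proposal is correct and follows essentially the same route as the paper, which justifies this proposition not by a formal proof but by the two remarks that follow it: a compactness argument (Peetre--Tartar, using Rellich for $l=0$ and the Weber/Picard-type compact embeddings of $\Hcrl\cap\Hdivz$ and $\Hcrlz\cap\Hdiv$ for $l=1,2$), equivalently a closed-range/open-mapping argument via Helmholtz isomorphisms, combined with a scaling to a unit-diameter domain to render the constants shape-dependent only. The one genuine (and correct) refinement you add is the duality reduction of the $l=2$ case to the $l=0$ case through an auxiliary Laplace problem, which avoids any vector-valued compactness there and yields $\CP^2\le\mCP^0$ and $\mCP^2\le\CP^0$, consistent with the identities $\CP^2=\mCP^0$, $\mCP^2=\CP^0$ recorded in the paper's remark on comparison of the continuous Poincar\'e constants.
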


\begin{remark}[Proofs without explicit bounds on constants]
\textup{(i)} One known route from the literature to establish the inequalities~\eqref{c_onto}--\eqref{c_ontom} is to invoke a \emph{compactness} argument, which can be formalized in the following Peetre--Tartar lemma~\cite[Lemma~A.20]{ErnGuermondbook}: Let $X,Y,Z$ be three Banach spaces, let $A\in\calL(X;Y)$ be an injective operator, and let $T\in\calL(X;Z)$ be a compact operator. Assume that there is $\gamma>0$ such that $\gamma\|u\|_X \le \|A(u)\|_Y+\|T(u)\|_Z$ for all $u\in X$. Then there is $\alpha>0$ such that
\begin{equation} \label{eq:Peetre_Tartar}
\alpha \|u\|_X \le \|A(u)\|_Y \qquad \forall u\in X.
\end{equation}
The Peetre--Tartar lemma can be combined with a (simple and natural) 
scaling argument in the definition of the norms
to make the constant $\alpha$ in~\eqref{eq:Peetre_Tartar} nondimensional.
To briefly illustrate, let us prove~\eqref{c_onto} for $l=0$. 
We set $X\eq\{ u \in \Hgrd \;|\; \bl u, 1\br_\omega = 0\}$, $\bY\eq\bL^2(\omega)$,
$Z \eq L^2(\omega)$, $\bA(u) \eq h_\omega \grad \, u$, and $T(u):=u$. The operator $\bA$ is injective since any $u\in X$ such that $\bA(u)=\bzero$ is $L^2$-orthogonal to itself and thus vanishes identically. Moreover, $T$ is compact since the embedding $H^1(\omega) \hookrightarrow L^2(\omega)$ is compact. Finally, we have $\|u\|_X^2 = \|u\|_{\Hgrd}^2 = \|u\|_{L^2(\omega)}^2 + h_\omega^2 \|\grad \, u\|_{\bL^2(\omega)}^2 = \|T(u)\|_Z^2 + \|\bA(u)\|_{\bY}^2$. Thus, by the Peetre--Tartar Lemma, \eqref{c_onto} for $l=0$ holds true. The proof for the other Poincar\'e inequalities is similar. In particular, for the curl and divergence operators, one invokes the compactness of the embeddings 
$\Hcrl \cap \Hdivz \hookrightarrow \bL^2(\omega)$ and $\Hcrlz \cap \Hdiv \hookrightarrow \bL^2(\omega)$, see~\cite[Theorem~2]{Costabel_1990}, 
\cite[Theorem~3.1]{Birman_Solomyak_1987}, \cite[Proposition~3.7]{AmBDG:98} and~\cite{Weber1980}.
\textup{(ii)} Another, somewhat related, route to prove the Poincar\'e inequalities hinges 
on Helmholtz decompositions which show that the following operators are isomorphisms
(see, e.g., \cite[Lemma~2.8 \& Remark~2.11]{Ern_Guermond_SINUM_2023} and the references
therein):
\begin{subequations}\begin{align}
\grad: {}&\{u \in \Hgrd \;|\; \bl u , 1\br_\omega = 0\} \longrightarrow \nn \\
& \hspace*{3cm} \{\bw \in \bL^2(\omega) \;|\; \bl \bw , \bv\br_\omega = 0 
\quad \forall \bv \in \Hdivz \,\text{s.t.}\, \div \bv=0\}, \\
\curl: {}&\{\bu \in \Hcrl \;|\; \bl \bu , \bv\br_\omega = 0 
\quad \forall \bv \in \Hcrl \,\text{s.t.}\, \curl \, \bv=\bzero\} \longrightarrow\nn \\
& \hspace*{3cm} \{\bw \in \bL^2(\omega) \;|\; \bl \bw , \bv\br_\omega = 0 
\quad \forall \bv \in \Hcrlz \,\text{s.t.}\, \curl \, \bv=\bzero\}, \\
\dive: {}&\{\bu \in \Hdiv \;|\; \bl \bu , \bv\br_\omega = 0 
\quad \forall \bv \in \Hdiv \,\text{s.t.}\, \dive \, \bv=0\} \longrightarrow \{w \in L^2(\omega)\},
\end{align}\end{subequations}
with similar isomorphisms in the case of prescribed boundary conditions. 
Then, the range
of all these operators is closed, and Banach's Closed Range theorem 
(see, e.g., \cite[Lemma~C.39]{EG_volII}) implies the Poincar\'e inequalities~\eqref{c_onto}--\eqref{c_ontom}. 
\textup{(iii)} If $\omega$ is star-shaped with respect to a ball, upper bounds on the continuous Poincar\'e constants $\CP^l$, $\mCP^l$, $l\in\{0{:}2\}$, can be derived from estimates on suitable right inverses (Bogovskii/Poincar\'e integral operators) of the adjoint differential operator (see, e.g., \cite{Duran_Bog_const_12}). A generalization  of the results in~\cite{Duran_Bog_const_12} to other differential operators can be found in~\cite{Guz_Salg_Bog_const_21}, see also~\cite{Chau_Licht_Voh_Poinc_de_Rham_25}.
\end{remark}

\begin{remark}[Proofs with explicit bounds on constants] \label{rem_litr}
\textup{(i)} Inequalities~\eqref{c_onto} and~\eqref{c_ontom} for $l=0$ are the well-known 
Poincar\'e inequalities. They can be shown constructively, as, e.g., in~\cite{Pay_Wei_Poin_conv_60, Beben_Poin_conv_03} or~\cite[Exercise 22.3]{ErnGuermondbook}, from where it follows that $\CP^0 = 1 / \pi$ if $\omega$ is convex  and $\mCP^0 \leq 1$. For general nonconvex domains with a finite convex cover, upper bounds on $\CP^0$ can be found in, \eg\, \cite[Lemma~3.7]{Eym_Gal_Her_00}.
\textup{(ii)} Computable upper bounds on the continuous Poincar\'e constants $\CP^l$, $\mCP^l$, $l\in\{1{:}2\}$ can be derived by considering a suitably enumerated (shellable) shape-regular mesh $\Tom$ of $\omega$ and determining these bounds in terms of the shape-regularity parameter $\rho_{\Tom}$ and the number of elements $|\Tom|$. This approach is detailed in~\cite{Chau_Licht_Voh_Poinc_de_Rham_25}, see also the references therein.
\end{remark}

\begin{remark}[Comparison of the continuous Poincar\'e constants] 
One has $\CP^2 = \mCP^0$, $\mCP^2 = \CP^0$, and $\CP^1 = \mCP^1$. 
We refer the reader to~\cite{PauVa:20} and the references therein for further insight into the relations between, and values of, the constants
in~\eqref{c_onto} and~\eqref{c_ontom}, including the case where boundary conditions are enforced only on part of the boundary of $\omega$. 
\end{remark}

\section{Discrete Poincar\'e inequalities} \label{sec_Poinc_disc}

In this section, we present our main result on the discrete 
Poincar\'e inequalities. We focus on the dependency of the discrete
Poincar\'e constants on the continuous-level constants $\CP^l$, $\mCP^l$, $l\in\{0{:}2\}$, and the shape-regularity parameter $\rho_{\Tom}$, the number of 
tetrahedra $|\Tom|$, and the polynomial degree $p$. 
We shall consider in Section~\ref{sec_dis_Poinc_proofs} three routes
to prove these inequalities, each one leading to different dependencies of the constants.

\subsection{Triangulations and finite element stars}
\label{sec:triang_stars}

Some specific tetrahedral meshes $\Tom$ will be of particular interest. 
We either look at $\Tom$ as a triangulation of some computational domain $\omega$, or we consider $\Tom$ as some local (vertex, edge, face) star of a shape-regular simplicial mesh $\Th$ of some larger three-dimensional computational domain $\Omega$ (open, bounded, connected, Lipschitz polyhedral set).
Let $\tau$ be a tetrahedron from $\Tom$. We will call a ``twice-extended element star'' a collection of such tetrahedra $\tau'$ from $\Tom$ which either share a vertex with $\tau$, $\tau \cap \tau' \neq \emptyset$, or such that there exists a tetrahedron $\tau''$ from $\Tom$ such that $\tau'$ shares a vertex with $\tau''$ and $\tau''$ shares a vertex with $\tau$.
As a specific case, we will consider triangulations $\Tom$ where all the domains of twice-extended element stars are Lipschitz and with a contractible closure. 

\subsection{Main result}

Our main result is as follows.

\begin{theorem}[Discrete Poincar\'e inequalities] \label{thm_disc_Poinc}
\textup{(i)} Discrete Poincar\'e inequalities without boundary conditions: There exist constants $\CPd{l}$, $l\in\{0{:}2\}$, such that
\begin{alignat}{2}
\|u\lT\|_{L^2(\omega)} &\le \CPd{l} h_\omega \| d^l u\lT \|_{L^2(\omega)} \qquad && \forall u\lT \in V_p^l(\Tom) \text{ such that } \bl u\lT , v\lT\br_\omega = 0 \nn \\
& && \qquad \forall v\lT \in V_p^l(\Tom) \text{ with } d^l v\lT = 0 \qquad \forall l\in\{0{:}2\}. \label{onto}
\end{alignat}
\textup{(ii)} Discrete Poincar\'e inequalities with boundary conditions: 
There exist constants $\mCPd{l}$, $l\in\{0{:}2\}$, such that
\begin{alignat}{2}
\|u\lT\|_{L^2(\omega)} &\le \mCPd{l} h_\omega \| d^l u\lT \|_{L^2(\omega)} \qquad && \forall u\lT \in \mV_p^l(\Tom) \text{ such that } \bl u\lT , v\lT\br_\omega = 0 \nn \\
& && \qquad \forall v\lT \in \mV_p^l(\Tom) \text{ with } d^l v\lT = 0 \qquad \forall l\in\{0{:}2\}. \label{ontom}
\end{alignat}
Here, the constants $\CPd{l}$, $\mCPd{l}$ have the following properties:
\begin{enumerate} 
\item \label{it_1} $\CPd{0}\le \CP^0$ and $\mCPd{0}\le \mCP^0$. Thus, $\CPd{0} \le 1 / \pi$ if $\omega$ is convex, and $\mCPd{0} \leq 1$ for any $\omega$, see the discussion in Remark~\ref{rem_litr}.

\item \label{it_2} If $\overline \omega$ is contractible, then there exist constants $C_{\mathrm{min}}^l$, $l\in\{1{:}2\}$, only depending on the shape-regularity parameter $\rho_{\Tom}$ and the number of tetrahedra $|\Tom|$, such that $\CPd{l} \le C_{\mathrm{min}}^l \CP^{l}$. If $\Tom$ is a vertex or edge star, then there exist constants $C_{\mathrm{min}}^l$, $l\in\{1{:}2\}$, only depending on the shape-regularity parameter $\rho_{\Tom}$, such that $\CPd{l} \le C_{\mathrm{min}}^l \CP^{l}$ and $\mCPd{l} \le C_{\mathrm{min}}^l \mCP^{l}$.
    
\item \label{it_3} There exist constants $C_{\mathrm{st}}^l$, $l\in\{1{:}2\}$, only depending on the shape-regularity parameter $\rho_{\Tom}$ and the polynomial degree $p$, such that $\CPd{l} \le C_{\mathrm{st}}^l \CP^{l}$ and $\mCPd{l} \le C_{\mathrm{st}}^l \mCP^{l}$. Moreover, if all domains of twice-extended element stars in $\Tom$ are Lipschitz and with a contractible closure, then $C_{\mathrm{st}}^2$ only depends on the shape-regularity parameter $\rho_{\Tom}$.

\item \label{it_4} The constants $\CPd{l}$, $\mCPd{l}$, $l\in\{1{:}2\}$, admit upper bounds that only depend on the shape-regularity parameter $\rho_{\Tom}$, the number of tetrahedra $|\Tom|$, and the polynomial degree $p$, but that do not need to invoke the constants $\CP^{l}$, $\mCP^{l}$.

\end{enumerate}

\end{theorem}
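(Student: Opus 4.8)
\subsection*{Proof proposal (plan)}

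The plan is to reduce the theorem, through Lemmas~\ref{lem_stab} and~\ref{lem_op_norm}, to the task of exhibiting, for every admissible datum $r\lT\in d^l(V_p^l(\Tom))$ (resp. $r\lT\in d^l(\mV_p^l(\Tom))$), \emph{some} discrete field $v\lT\in V_p^l(\Tom)$ (resp. $\mV_p^l(\Tom)$) with $d^l v\lT=r\lT$ and $\|v\lT\|_{L^2(\omega)}\le C\,h_\omega\|r\lT\|_{L^2(\omega)}$: the $L^2(\omega)$-minimal field $u\lT^{*}$ in~\eqref{eq_min} then obeys the same bound, and since any $u\lT$ meeting the orthogonality constraint in~\eqref{onto}/\eqref{ontom} coincides with $u\lT^{*}$ for the choice $r\lT\eq d^l u\lT$, the inequalities~\eqref{onto}--\eqref{ontom} follow with $\CPd{l}\le C$ (resp. $\mCPd{l}\le C$). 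Property~\ref{it_1} is immediate: $V_p^0(\Tom)\subseteq V^0(\omega)$, $\mV_p^0(\Tom)\subseteq\mV^0(\omega)$, and the kernel of $d^0=\grad$ is $\R$ (resp. $\{0\}$) on both the discrete and the continuous space, so~\eqref{onto}--\eqref{ontom} for $l=0$ reduce to~\eqref{c_onto}--\eqref{c_ontom} with unchanged constant, the explicit values being those of Remark~\ref{rem_litr}.

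For $l\in\{1{:}2\}$ I would run three constructions of $v\lT$, each tuned to a different dependency. The first invokes the existence (known from the literature, with stability constants depending only on $\rho_{\Tom}$ and $p$) of a graph-stable commuting projection onto the discrete de Rham sequence~\eqref{complex_h} (resp.~\eqref{complex_hm}), i.e. operators $\Pi_p^k$ with $d^k\Pi_p^k=\Pi_p^{k+1}d^k$ and $\|\Pi_p^k v\|_{L^2(\omega)}\le C_{\mathrm{st}}\big(\|v\|_{L^2(\omega)}+h_\omega\|d^k v\|_{L^2(\omega)}\big)$. Given $r\lT$, take the $L^2(\omega)$-minimal continuous primitive $w\in V^l(\omega)$ with $d^l w=r\lT$; by Proposition~\ref{prop_Poinc}, $\|w\|_{L^2(\omega)}\le\CP^l h_\omega\|r\lT\|_{L^2(\omega)}$. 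Then $v\lT\eq\Pi_p^l w\in V_p^l(\Tom)$ satisfies $d^l v\lT=\Pi_p^{l+1}(d^l w)=\Pi_p^{l+1}r\lT=r\lT$ since $r\lT$ is already discrete, while graph-stability gives $\|v\lT\|_{L^2(\omega)}\le C_{\mathrm{st}}(1+\CP^l)h_\omega\|r\lT\|_{L^2(\omega)}$, hence $\CPd{l}\le C_{\mathrm{st}}^l\CP^l$ because $\CP^l$ admits a universal lower bound. A \emph{local} (element-patch-based) commuting projection makes $C_{\mathrm{st}}^l$ depend only on $\rho_{\Tom}$ and $p$, which is Property~\ref{it_3}; for $l=2$, a projection assembled from \emph{$p$-robust} normal-trace liftings on twice-extended element stars---available precisely when those stars are Lipschitz with contractible closure, following~\cite{Demk_Voh_loc_glob_H_div_25}---removes the $p$-dependence, giving the refined statement on $C_{\mathrm{st}}^2$. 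The boundary-condition cases use the trace-preserving variants; this route is the (existence $\Rightarrow$ inequality) half of the equivalence recalled in Lemma~\ref{lem_eq_stab_proj_dP}.

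The second construction targets Property~\ref{it_2} and must be $p$-robust. When $\Tom$ is a single vertex or edge star, one invokes the known $p$-robust stability of the constrained minimizations~\eqref{eq_H_curl_div_min} on stars (\cite{Ern_Voh_p_rob_3D_20} for $\Hdiv$ vertex stars; \cite{Chaum_Ern_Voh_Maxw_22,Chaum_Voh_p_rob_3D_H_curl_24} for $\Hcrl$ edge and vertex stars), which by Lemma~\ref{lem_op_norm} reads $\CPd{l}\le C_{\mathrm{min}}^l\CP^l$ and $\mCPd{l}\le C_{\mathrm{min}}^l\mCP^l$ with $C_{\mathrm{min}}^l$ depending only on $\rho_{\Tom}$. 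For a general triangulation of a contractible $\overline{\omega}$, one lifts $r\lT$ to a continuous $L^2(\omega)$-minimal primitive as before, localizes it by a partition of unity subordinate to the vertex stars of $\Tom$, and compensates the commutation defect of the localization step by step down the complex using the star-level $p$-robust potentials; summing the local contributions and counting mesh overlaps inserts a factor depending on $\rho_{\Tom}$ and $|\Tom|$ but not on $p$, yielding $\CPd{l}\le C_{\mathrm{min}}^l\CP^l$. The third construction is self-standing and gives Property~\ref{it_4}: enumerate the tetrahedra of $\Tom$, build the discrete primitive incrementally element by element by pulling back a reference-element right-inverse of $d^l$ through the covariant/contravariant Piola transform and repairing the tangential/normal trace mismatch on the already-treated part of the boundary by polynomial trace liftings; each step is controlled by $\rho_{\Tom}$ and $p$ only, so accumulation over the $|\Tom|$ tetrahedra yields a bound in $\rho_{\Tom}$, $|\Tom|$, $p$ alone, with no reference to $\CP^l$. (Property~\ref{it_4} also follows by combining Property~\ref{it_2} or~\ref{it_3} with the computable upper bound on $\CP^l$ in Remark~\ref{rem_litr}(ii).)

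The main obstacle is common to the first two routes: converting a continuous (or reference-element) primitive into a \emph{conforming} discrete one with simultaneous control of the $L^2(\omega)$-norm and of inter-element continuity, while tracking exactly which of $\rho_{\Tom}$, $|\Tom|$, $p$ the constants are allowed to see. The delicate points are the construction of graph-stable commuting projections with constants depending only on $(\rho_{\Tom},p)$---and $p$-robustly for $\Hdiv$ under the twice-extended-star hypothesis---and the $p$-robust localize-and-correct argument for a general contractible domain, where the partition-of-unity step must be compensated \emph{inside} the de Rham complex so as not to spoil $p$-robustness. By contrast, the incremental Piola construction for Property~\ref{it_4} is technically routine once single-element right-inverses and polynomial trace liftings are available, the only care being that the per-element constants not covertly depend on $|\Tom|$.
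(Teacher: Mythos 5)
Your overall reduction via Lemma~\ref{lem_stab} (exhibit \emph{any} admissible discrete primitive with a controlled $L^2$-norm; the minimizer does at least as well), your treatment of item~(1), and your first two constructions coincide with the paper's Routes~2 and~1 (Sections~\ref{sec_route_2} and~\ref{sec_route_1}). For item~(3) you apply a graph-stable commuting projection to the continuous minimal primitive; the paper states the corresponding lemma with $L^2$-stability on data whose image under $d^l$ is piecewise polynomial and covers the graph-norm variant in Remark~\ref{rem_graph_stab}, so this is the same argument. For item~(2) you invoke the same $p$-robust star results from the literature; for a general contractible $\overline\omega$ the paper simply cites \cite{Demk_Voh_loc_glob_H_div_25} rather than carrying out your sketched partition-of-unity/commutation-defect localization, so neither text supplies details there, but the claimed dependencies match.

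For item~(4) you depart from the paper, and this is where there is a genuine gap. Your incremental, element-by-element construction of a discrete primitive implicitly requires an ordering of the tetrahedra in which each new element attaches to the already-treated union along a topologically trivial set of faces and in which the intermediate unions stay tame; otherwise the local lifting problem on the new element with partially prescribed traces need not be solvable. Concretely, for $l=1$: if the already-treated region contains a non-contractible loop that the new tetrahedron closes, the tangential traces already fixed on the shared faces impose a circulation compatibility condition that a greedy local construction has no reason to satisfy, even though a global discrete primitive of $r\lT$ exists. Such orderings (shellings) do not exist for arbitrary tetrahedral meshes (there are non-shellable triangulations of the $3$-ball), and item~(4) is precisely the statement that imposes \emph{no} topological or combinatorial hypothesis on $\Tom$. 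The paper's Route~3 (Section~\ref{sec_route_3}) avoids any construction: for bounded $\rho_{\Tom}$ and $|\Tom|$ there are only finitely many connectivity arrays, hence a finite set of reference meshes on each of which the discrete Poincar\'e inequality holds by norm equivalence in finite dimension; piecewise Piola transformations then transport the inequality to $\Tom$, at the cost of factors $\overline{h}_{\Tom}/\underline{h}_{\Tom}$ controlled by $\rho_{\Tom}$ and $|\Tom|$. Your parenthetical fallback (combine item~(3) with an upper bound on $\CP^l$) does not deliver item~(4) as stated either, since that item explicitly asks for bounds that do not pass through the continuous constants, and the cited computable upper bounds on $\CP^l$ themselves rely on a shellable auxiliary mesh.
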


\subsection{Discussion} Let us discuss items~\eqref{it_2} to~\eqref{it_4} of Theorem~\ref{thm_disc_Poinc}:

\begin{itemize}
  
\item {\bf Discussion of~\eqref{it_2}.} This result is established in Section~\ref{sec_route_1} below and relies on piecewise polynomial extension operators. 
    The constants here are systematically $p$-robust, but can unfavorably depend on the number of tetrahedra in $\Tom$. In stars or extended stars or any local patches, $|\Tom|$ is bounded as a function of the shape-regularity parameter $\rho_{\Tom}$, leading to discrete Poincar\'e constants only depending on $\rho_{\Tom}$ and the continuous Poincar\'e constants $\CP^{l}$ or $\mCP^{l}$, $l\in\{1{:}2\}$ (for which upper bounds only depending on the shape-regularity parameter $\rho_{\Tom}$ can be derived as discussed in Remark~\ref{rem_litr}). The assumption that $\overline \omega$ is contractible is automatically satisfied if $\Tom$ is a vertex or edge star. For a general domain $\Om$ with a mesh $\Th$, local stars with contractible $\omega$ are supposed in, \eg, \cite{Falk_Winth_loc_coch_14, Arn_Guz_loc_stab_L2_com_proj_21}. This assumption does not request the whole computational domain $\Om$ to be contractible, but merely the local star-domains $\omega$. For example, for a domain $\Om$ with a hole, there may be local stars with non-contractible corresponding $\overline \omega$ if $\Th$ is rather coarse, but typically all local star-domains $\omega$ are of contractible closure on finer meshes. We refer for further details to the recent discussion in~\cite[Remark~2.1]{EGPV_HO:25}.
  
\item {\bf Discussion of~\eqref{it_3}.} This result is proven in Section~\ref{sec_route_2} below upon relying on stable commuting projections. This is probably the most common way of proving the discrete Poincar\'e inequalities. In this case, the constants $\CPd{l}$, $\mCPd{l}$, $l\in\{1{:}2\}$, are independent of the number of tetrahedra in $\Tom$ (i.e., this number can be arbitrarily high), but may (unfavorably) depend on the polynomial degree $p$.
    In the $\Hdiv$ setting ($l=2$), the $p$-robust projector from~\cite[Definition~3.3]{Demk_Voh_loc_glob_H_div_25} gives a constant $\CPd{2}$ independent of both the number of tetrahedra in $\Tom$ and the polynomial degree $p$ if the domains of all twice-extended element stars in $\Tom$ are Lipschitz and with a contractible closure (as discussed above, this is typically satisfied in practice, at least for sufficient mesh refinement). To our knowledge, this is the best result available so far. Once again, upper bounds on the continuous Poincar\'e constants only depending on the shape-regularity parameter $\rho_{\Tom}$ can be derived as discussed in Remark~\ref{rem_litr}. 

\item {\bf Discussion of~\eqref{it_4}.} This result is established in Section~\ref{sec_route_3} below. The technique of proof does not rely on the continuous Poincar\'e inequalities. Therefore, the upper bounds on the discrete Poincar\'e constants do not involve here the constants $\CP^l$, $\mCP^l$, $l\in\{1{:}2\}$. There are no requirements on the triangulation $\Tom$ either (as $\overline\omega$ being contractible or $\omega$ being a local star). The direct proof argument leads to discrete Poincar\'e constants depending on the shape-regularity parameter $\rho_{\Tom}$, the number of tetrahedra $|\Tom|$, and the polynomial degree $p$.
\end{itemize}

\begin{remark}[Extension to any space dimension in the framework of finite element exterior calculus] 
The results on continuous Poincar\'e inequalities of Proposition~\ref{prop_Poinc} extend to any space dimension in the framework of finite element exterior calculus, see~\cite{arnold2006finite, arnold2010finite, arnold2018finite,Guz_Salg_Bog_const_21,Chau_Licht_Voh_Poinc_de_Rham_25} and the references therein.
The same holds true for the results on discrete Poincar\'e inequalities of Theorem~\ref{thm_disc_Poinc}, points~\eqref{it_1} and~\eqref{it_4}, where the proofs do not use any specific information on the space dimension and the operator $d^l$. The results of points~\eqref{it_2} and~\eqref{it_3}, instead, are obtained by invoking specific results for the $\curl$ and $\dive$ operators and are currently only available in three space dimensions as stated in Theorem~\ref{thm_disc_Poinc} (as well as in two space dimensions up to straightforward adaptations). 
\end{remark}

\section{Proofs of discrete Poincar\'e inequalities}\label{sec_dis_Poinc_proofs}

In this section, we describe the three routes mentioned above to prove the discrete Poincar\'e inequalities~\eqref{onto} and~\eqref{ontom} for $l\in\{1{:}2\}$, leading to Theorem~\ref{thm_disc_Poinc}. Recall that these three routes respectively consist in:
\begin{enumerate}
\item Invoking equivalence between discrete and continuous minimizers (piecewise polynomial extension operators);
\item Invoking stable commuting projections with stability in $L^2$ for data whose image by $d^l$ is piecewise polynomial (we also comment on stability in graph spaces and fractional-order Sobolev spaces);
\item Invoking piecewise Piola transformations.
\end{enumerate}
We observe that the first two routes hinge on the continuous Poincar\'e inequalities~\eqref{c_onto} and~\eqref{c_ontom} for $l\in\{1{:}2\}$, whereas the third route employs only a finite-dimensional argument. The different routes give the different dependencies of the discrete Poincar\'e constant on the parameters $\rho_{\Tom}$, $|\Tom|$, and $p$, as summarized in items~\eqref{it_2}--\eqref{it_4} of Theorem~\ref{thm_disc_Poinc}.
For routes 1 and 2, we give pointers to the literature providing tools to realize the proofs, whereas we present a stand-alone proof for route~3.

\subsection{Unified presentation}

We introduce some more (unified) notation, building on Section~\ref{sec_comp_not}. Let $l\in\{0{:}2\}$.We define the kernels of the differential operators
\begin{align} \label{eq:spaces_dz} 
\Zz V^l(\omega) & \eq \{u \in V^l(\omega) \;:\; d^l u=0\},
\end{align}%
where we notice that $\Zz V^0(\omega)=\{u\in V^0(\omega) \;:\; u = \text{constant}\}$. We also define their $L^2$-orthogonal complements
\begin{align} \label{eq:spaces_dz_perp} 
\Zz^\perp V^l(\omega) &\eq \{u\in V^l(\omega)\;:\; \bl u , v\br_\omega = 0 \quad \forall v \in \Zz V^l(\omega)\}, 
\end{align}%
where we notice that $\Zz^\perp V^0(\omega)=\{u\in V^0(\omega) \;:\; \bl u,1\br_\omega=0\}$.
We define the spaces $\Zz \mV^l(\omega)$ as
in~\eqref{eq:spaces_dz} (notice that $\Hgrdzgrdz = \{0\}$), and 
their $L^2$-orthogonal complements $\Zz^\perp \mV^l(\omega)$ as in~\eqref{eq:spaces_dz_perp}
(notice that $\Zz^\perp \mV^0(\omega)=\mV^0(\omega)$).

We define similarly the kernels of the differential operators in the discrete spaces~\eqref{eq:local_spaces}. We namely set
\begin{align} \label{eq:def_ZZ} 
\Zz V_p^l(\Tom) &\eq  \{ u\lT \in  V_p^l(\Tom): d^l u\lT=0 \}, 
\end{align}%
whereas the $L^2$-orthogonal complements are defined as
\begin{align} \label{eq:def_ZZ_perp} 
\Zz^{\perp} V_p^l(\Tom) &\eq \{ u\lT \in  V_p^l(\Tom): \bl u\lT,v\lT \br_{\omega} =0 \quad \forall v\lT \in  \Zz V_p^l(\Tom)\}. 
\end{align}
We define the subspaces $\Zz \mV_p^l(\Tom)$ as well as $\Zz^\perp \mV_p^l(\Tom)$ similarly. 

Finally, to unify the notation regarding boundary conditions, we set, for all $l\in\{1{:}2\}$,
\begin{subequations}\begin{alignat}{4}
{\widetilde V}^l(\omega)& \eq V^l(\omega) \; & &\text{or} \; \mV^l(\omega),&\qquad
{\widetilde V}_p^l(\Tom)& \eq V_p^l(\Tom) \; & & \text{or} \; \mV_p^l(\Tom),\\
\Zz^{\perp} \widetilde V^l(\omega)&\eq\Zz^\perp V^l(\omega) \; & & \text{or} \; \Zz^\perp \mV^l(\omega),&\qquad
{\Zz^{\perp} \widetilde V}_p^l(\Tom)&\eq\Zz^{\perp} V_p^l(\Tom) \; & & \text{or} \; \Zz^{\perp} \mV_p^l(\Tom).
\end{alignat}\end{subequations}
Then, the continuous Poincar\'e inequalities~\eqref{c_onto}--\eqref{c_ontom} are rewritten as follows:
\begin{equation} \label{c_onto_abstract} 
\|u\|_{L^2(\omega)} \le \tCP^l h_\omega \| d^l u \|_{L^2(\omega)}  \qquad \forall u \in {\Zz^{\perp} \widetilde V}^l(\omega)  \qquad \forall l\in\{0{:}2\},
\end{equation}
with $\tCP^l\eq\CP^l$ or $\mCP^l$ depending on the context, and the discrete  
Poincar\'e inequalities~\eqref{onto}--\eqref{ontom} are rewritten as follows:
\begin{equation} \label{onto_abstract} 
\|u\lT\|_{L^2(\omega)} \le \tCPd{l} h_\omega \| d^l u\lT \|_{L^2(\omega)} \qquad \forall u\lT \in {\Zz^{\perp} \widetilde V}_p^l(\Tom) \qquad \forall l\in\{0{:}2\},
\end{equation}
with $\tCPd{l}\eq\CPd{l}$ or $\mCPd{l}$ depending on the context.

\subsection{Route 1: Invoking equivalence between discrete and continuous minimizers} \label{sec_route_1}

Let $l\in\{1{:}2\}$.
For all $r\lT \in d^l({\widetilde V}_p^l(\Tom)) \subset {\widetilde V}_p^{l+1}(\Tom)$, as in~\eqref{eq_min}, we consider the following two 
constrained quadratic minimization problems:
\begin{subequations}\label{eq:argmin_dm}\begin{align} 
u\lT^* & \eq \Argmin_{\substack{v\lT \in {\widetilde V}_p^l(\Tom) \\ d^l v\lT = r\lT}} \|v\lT\|_{L^2(\omega)}^2, \label{eq:argmin_dm_disc}\\
u^* & \eq \Argmin_{\substack{v \in {\widetilde V}^l(\omega) \\ d^l v = r\lT}} \|v\|_{L^2(\omega)}^2.\label{eq:argmin_dm_cont}
\end{align}\end{subequations}
We notice that the finite-dimensional 
minimization set $\{v\lT \in {\widetilde V}_p^l(\Tom) \;:\; d^l v\lT = r\lT\}$ 
is nonempty, closed, and convex, and so is also the larger, infinite-dimensional set 
$\{v \in {\widetilde V}^l(\omega) \;:\; d^l v = r\lT\}$. Thus, as for~\eqref{eq_min}, both problems admit a unique minimizer. Moreover, we trivially have
\[
\|u^*\|_{L^2(\omega)} \le \|u^*\lT\|_{L^2(\omega)}.
\]
The Euler optimality conditions respectively read, \cf\ \eqref{eq_min_EL}: 
\begin{equation}\label{eq:min_EL_disc} \left\{ \begin{aligned} 
&\text{Find } u\lT^* \in {\widetilde V}_p^l(\Tom) \text{ with } d^l u\lT^* = r\lT \text{ such that }\\
&\bl u\lT^*, v\lT\br_\omega = 0 \quad \forall v\lT \in {\widetilde V}_p^l(\Tom) \text{ with } d^l v\lT = 0
\end{aligned}\right. \end{equation}
and
\begin{equation}\label{eq:min_EL} \left\{ \begin{aligned}
&\text{Find } u^* \in {\widetilde V}^l(\omega) \text{ with } d^l u^* = r\lT \text{ such that }\\
&\bl u^*, v\br_\omega = 0 \quad \forall v \in {\widetilde V}^l(\omega) \text{ with } d^l v = 0.
\end{aligned}\right. \end{equation}

\begin{lemma}[Discrete Poincar\'e inequalities invoking equivalence between discrete and continuous minimizers]
Let $l\in\{1{:}2\}$.
Assume that there is $C_{\mathrm{min}}^l$ such that, for all $r\lT \in d^l({\widetilde V}_p^l(\Tom))$, the solutions to~\eqref{eq:argmin_dm} satisfy
\begin{equation} \label{ass:min}
\|u^*\lT\|_{L^2(\omega)} \le C_{\mathrm{min}}^l \|u^*\|_{L^2(\omega)}.
\end{equation}
Then~\eqref{onto_abstract} holds true with constant $\tCPd{l}\le C_{\mathrm{min}}^l \tCP^l$.
\end{lemma}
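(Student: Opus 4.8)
The plan is to prove the claim by a short chaining argument that (i) identifies an arbitrary element of $\Zz^\perp \widetilde V_p^l(\Tom)$ with the discrete minimizer of~\eqref{eq:argmin_dm_disc} for a suitable datum, (ii) bounds its $L^2$-norm by that of the corresponding continuous minimizer via the assumption~\eqref{ass:min}, and (iii) controls the continuous minimizer by the continuous Poincar\'e inequality~\eqref{c_onto_abstract}.

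Concretely, fix $l\in\{1{:}2\}$ and let $u\lT \in \Zz^{\perp} \widetilde V_p^l(\Tom)$ be arbitrary. First I would set $r\lT \eq d^l u\lT \in d^l({\widetilde V}_p^l(\Tom))$ and observe that, by the definition of $\Zz^{\perp} \widetilde V_p^l(\Tom)$, $u\lT$ is $L^2$-orthogonal to $\Zz \widetilde V_p^l(\Tom) = \{ v\lT \in {\widetilde V}_p^l(\Tom) : d^l v\lT = 0\}$, so $u\lT$ satisfies the Euler optimality conditions~\eqref{eq:min_EL_disc} for this datum. Since~\eqref{eq:argmin_dm_disc} has a unique solution (as recalled for~\eqref{eq_min}), we conclude $u\lT^* = u\lT$. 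Next, I would invoke the assumption~\eqref{ass:min} to get $\|u\lT\|_{L^2(\omega)} = \|u\lT^*\|_{L^2(\omega)} \le C_{\mathrm{min}}^l \|u^*\|_{L^2(\omega)}$, where $u^*$ solves the continuous minimization problem~\eqref{eq:argmin_dm_cont} with the same datum $r\lT$.

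Finally, I would note that $u^*$ satisfies~\eqref{eq:min_EL}, hence is $L^2$-orthogonal to $\Zz \widetilde V^l(\omega)$, i.e.\ $u^* \in \Zz^{\perp} \widetilde V^l(\omega)$, with $d^l u^* = r\lT = d^l u\lT$. Applying the continuous Poincar\'e inequality~\eqref{c_onto_abstract} to $u^*$ then gives $\|u^*\|_{L^2(\omega)} \le \tCP^l h_\omega \|d^l u^*\|_{L^2(\omega)} = \tCP^l h_\omega \|d^l u\lT\|_{L^2(\omega)}$. Combining the last two displays yields $\|u\lT\|_{L^2(\omega)} \le C_{\mathrm{min}}^l \tCP^l h_\omega \|d^l u\lT\|_{L^2(\omega)}$, which is~\eqref{onto_abstract} with $\tCPd{l} \le C_{\mathrm{min}}^l \tCP^l$, as claimed.

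There is no real obstacle here: the argument is purely a composition of (a) the equivalence between the constrained minimization and its Euler conditions (already established around Lemma~\ref{lem_stab}), (b) uniqueness of the discrete minimizer, (c) the hypothesis~\eqref{ass:min}, and (d) the continuous Poincar\'e inequality. The only point deserving a word of care is that the continuous minimizer automatically lands in $\Zz^\perp \widetilde V^l(\omega)$, which is immediate from its Euler conditions~\eqref{eq:min_EL}; everything else is bookkeeping. The genuinely substantive content — establishing the hypothesis~\eqref{ass:min}, i.e.\ that the discrete norm-minimizing potential is comparable to the continuous one, via piecewise polynomial extension operators — is deferred to the subsequent development in Section~\ref{sec_route_1}.
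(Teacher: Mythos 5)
Your argument is correct and coincides with the paper's own proof: identify $u\lT$ with the discrete minimizer via the Euler conditions and uniqueness, apply the hypothesis~\eqref{ass:min}, then apply the continuous Poincar\'e inequality~\eqref{c_onto_abstract} to $u^*\in\Zz^{\perp}\widetilde V^l(\omega)$ and use $d^l u^*=d^l u\lT$. Nothing is missing.
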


\begin{proof}
Let $u\lT\in {\Zz^{\perp} \widetilde V}_p^l(\Tom)$. Set $r\lT \eq d^lu\lT$. Since ${\Zz^{\perp} \widetilde V}_p^l(\Tom)\subset {\widetilde V}_p^l(\Tom)$, we have 
$r\lT\in d^l({\widetilde V}_p^l(\Tom))$. Moreover, by considering the Euler conditions~\eqref{eq:min_EL_disc} and~\eqref{eq:min_EL}, we infer that $u^*\lT \in {\Zz^{\perp} \widetilde V}_p^l(\Tom)$ and 
$u^* \in {\Zz^{\perp} \widetilde V}^l(\omega)$. In addition, since the minimization problems admit a unique solution, and since $u\lT$ satisfies the Euler conditions for the discrete problem, we have $u\lT=u^*\lT$. Invoking~\eqref{ass:min} followed by the continuous Poincar\'e inequality~\eqref{c_onto_abstract} gives
\[
\|u\lT\|_{L^2(\omega)} = \|u^*\lT\|_{L^2(\omega)} \le C_{\mathrm{min}}^l \|u^*\|_{L^2(\omega)}
\le C_{\mathrm{min}}^l \tCP^l h_\omega \|d^l u^*\|_{L^2(\omega)}.
\]
Since, from~\eqref{eq:argmin_dm}, $\|d^l u^*\|_{L^2(\omega)}=\|r\lT\|_{L^2(\omega)}=\|d^l u\lT\|_{L^2(\omega)}$, we conclude that~\eqref{onto_abstract} holds true with constant $\tCPd{l}\le C_{\mathrm{min}}^l \tCP^l$.
\end{proof}

In the case of homogeneous boundary conditions, the minimizations~\eqref{eq:argmin_dm} take the form
\begin{equation} \label{eq_min_curl_div}
    u\lT^* \eq \Argmin_{\substack{v\lT \in \mV_p^l(\Tom) \\ d^l v\lT = r\lT}} \|v\lT\|_{L^2(\omega)}^2, 
    \qquad
    u^* \eq \Argmin_{\substack{v \in \mV^l(\omega) \\ d^l v = r\lT}} \|v\|_{L^2(\omega)}^2
\end{equation}
with data $r\lT \in d^l (\mV_p^l(\Tom))$. In the case $l=2$ (divergence operator), \eqref{ass:min} has been established in~\cite[Corollaries~3.3 and~3.8]{Ern_Voh_p_rob_3D_20} whenever $\Tom$ is a vertex star, see also~\cite[Proposition~3.1 and Corollary~4.1]{Chaum_Voh_p_rob_3D_H_curl_24}. 
In the case $l=1$ (curl operator), \eqref{ass:min} has been established in~\cite[Proposition~6.6]{Chaum_Ern_Voh_Maxw_22}
for edge stars and in~\cite[Theorem~3.3 and Corollary~4.3]{Chaum_Voh_p_rob_3D_H_curl_24} for vertex stars.

In the case without boundary conditions, the minimizations~\eqref{eq:argmin_dm} take the form
\begin{equation} \label{eq_min_curl_div_ext}
    u\lT^* \eq \Argmin_{\substack{v\lT \in V_p^l(\Tom) \\ d^l v\lT = r\lT}} \|v\lT\|_{L^2(\omega)}^2, 
    \qquad
    u^* \eq \Argmin_{\substack{v \in V^l(\omega) \\ d^l v = r\lT}} \|v\|_{L^2(\omega)}^2
\end{equation}
with data $r\lT \in d^l (V_p^l(\Tom))$.
When the mesh $\Tom$ is more complex than a vertex star, \cite[Theorem~C.1]{Demk_Voh_loc_glob_H_div_25} gives the desired result~\eqref{ass:min} in the case $l=2$ (divergence operator) under the assumption that $\overline \omega$ is contractible. The case $l=1$ (curl operator) can be treated similarly.

One interesting outcome of the proofs based on route 1 is that
the constant $C_{\mathrm{min}}^l$, and consequently $\tCPd{l}$, is independent of the polynomial degree $p$. Still, $C_{\mathrm{min}}^l$ and $\tCPd{l}$ depend on $|\Tom|$ and the shape-regularity parameter $\rho_{\Tom}$, leading to item~\eqref{it_2} of Theorem~\ref{thm_disc_Poinc} (recall that for stars or any local patch, $|\Tom|$ only depends on $\rho_{\Tom}$).

\subsection{Route 2: Invoking stable commuting projections} \label{sec_route_2}

Here, we proceed as in~\cite{Fort_MFEs_77}, \cite[Theorem~5.11]{arnold2006finite},
\cite[Theorem~3.6]{arnold2010finite}, \cite{Christ_Wint_sm_proj_08}, \cite[Proposition~5.4.2]{BBF13}, \cite{Falk_Winth_loc_coch_14}, and~\cite[Theorem~44.6 \& Remark~51.12]{EG_volII}.

\begin{lemma}[Discrete Poincar\'e inequalities invoking stable commuting projections] \label{lem:route2}
Assume that there are projections
$\Pi_p^m:{\widetilde V}^m(\omega) \rightarrow {\widetilde V}_p^m(\Tom)$, $m\in\{1{:}3\}$, satisfying, for all $l\in\{1{:}2\}$, the commuting
property
\begin{equation} \label{ass:commut} 
d^l(\Pi_p^l(u)) = \Pi_p^{l+1}(d^l u) \qquad \forall u\in {\widetilde V}^l(\omega),
\end{equation}
and the $L^2$-stability property on data whose image by $d^l$ is piecewise polynomial
\begin{equation} \label{eq:bound_L2} 
\|\Pi_p^l(u)\|_{L^2(\omega)} \leq C_{\mathrm{st}} \|u\|_{L^2(\omega)} \qquad \forall u\in {\widetilde V}^l(\omega) \text{ such that } d^l u \in {\widetilde V}_p^{l+1}(\Tom).
\end{equation}
Then~\eqref{onto_abstract} holds true with constant $\tCPd{l}\le C_{\mathrm{st}} \tCP^l$.
\end{lemma}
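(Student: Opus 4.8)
The plan is to take an arbitrary $u\lT\in{\Zz^{\perp}\widetilde V}_p^l(\Tom)$, set $r\lT\eq d^l u\lT\in d^l({\widetilde V}_p^l(\Tom))\subset{\widetilde V}_p^{l+1}(\Tom)$, and introduce the continuous minimizer $u^*$ from~\eqref{eq:argmin_dm_cont}, i.e.\ the unique element of ${\Zz^{\perp}\widetilde V}^l(\omega)$ with $d^l u^*=r\lT$ (that $u^*$ lies in the orthogonal complement follows from its Euler conditions~\eqref{eq:min_EL}, exactly as in the previous proof). The key idea is then to apply the commuting projection $\Pi_p^l$ to $u^*$: by the commuting property~\eqref{ass:commut}, $d^l(\Pi_p^l(u^*))=\Pi_p^{l+1}(d^l u^*)=\Pi_p^{l+1}(r\lT)=r\lT$, where the last equality uses that $\Pi_p^{l+1}$ is a projection onto ${\widetilde V}_p^{l+1}(\Tom)$ and $r\lT$ already belongs to that space. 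Hence $\Pi_p^l(u^*)$ is an admissible competitor in the discrete minimization~\eqref{eq:argmin_dm_disc}.

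Next I would compare $u\lT$ with $u\lT^*$ and with $\Pi_p^l(u^*)$. Since $u\lT\in{\Zz^{\perp}\widetilde V}_p^l(\Tom)$ satisfies the discrete Euler conditions~\eqref{eq:min_EL_disc} with datum $r\lT$, uniqueness of the discrete minimizer gives $u\lT=u\lT^*$. By minimality of $u\lT^*$ among discrete fields with $d^l\,\cdot=r\lT$, and since $\Pi_p^l(u^*)$ is such a field,
\begin{equation*}
\|u\lT\|_{L^2(\omega)}=\|u\lT^*\|_{L^2(\omega)}\le\|\Pi_p^l(u^*)\|_{L^2(\omega)}.
\end{equation*}
Now $d^l u^*=r\lT\in{\widetilde V}_p^{l+1}(\Tom)$ is piecewise polynomial, so the $L^2$-stability hypothesis~\eqref{eq:bound_L2} applies to $u^*$ and yields $\|\Pi_p^l(u^*)\|_{L^2(\omega)}\le C_{\mathrm{st}}\|u^*\|_{L^2(\omega)}$. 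Finally the continuous Poincar\'e inequality~\eqref{c_onto_abstract}, valid because $u^*\in{\Zz^{\perp}\widetilde V}^l(\omega)$, gives $\|u^*\|_{L^2(\omega)}\le\tCP^l h_\omega\|d^l u^*\|_{L^2(\omega)}$. Chaining these bounds and using $\|d^l u^*\|_{L^2(\omega)}=\|r\lT\|_{L^2(\omega)}=\|d^l u\lT\|_{L^2(\omega)}$ produces $\|u\lT\|_{L^2(\omega)}\le C_{\mathrm{st}}\tCP^l h_\omega\|d^l u\lT\|_{L^2(\omega)}$, which is~\eqref{onto_abstract} with $\tCPd{l}\le C_{\mathrm{st}}\tCP^l$.

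There is really no serious obstacle here: the argument is a short chain of three inequalities once the setup is in place. The one point that deserves care — and is the closest thing to a subtle step — is verifying that $\Pi_p^l(u^*)$ is genuinely admissible for the discrete problem, which rests on two facts: that the commuting diagram~\eqref{ass:commut} intertwines $\Pi_p^l$ and $\Pi_p^{l+1}$ through $d^l$, and that $\Pi_p^{l+1}$ fixes the already-discrete, piecewise-polynomial field $r\lT$ (this is where the precise formulation of hypothesis~\eqref{eq:bound_L2}, requiring stability only on data whose $d^l$-image is piecewise polynomial, exactly matches what is needed). One could alternatively bypass the minimizers and argue directly that $u\lT-\Pi_p^l(u^*)\in\Zz V_p^l(\Tom)$ is $L^2$-orthogonal to $u\lT$, so that $\|u\lT\|_{L^2(\omega)}\le\|\Pi_p^l(u^*)\|_{L^2(\omega)}$ by Pythagoras; but invoking the already-proved equivalence with the constrained minimization~\eqref{eq_min} via Lemma~\ref{lem_stab} keeps the exposition uniform with Route~1.
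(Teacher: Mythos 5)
Your proposal is correct and follows essentially the same route as the paper's own proof: identify $u\lT$ with the discrete minimizer, show via the commuting property and the projection property of $\Pi_p^{l+1}$ that $\Pi_p^l(u^*)$ is an admissible competitor, and then chain minimality, the $L^2$-stability hypothesis, and the continuous Poincar\'e inequality. The only (harmless) difference is that you spell out explicitly that $u^*\in{\Zz^{\perp}\widetilde V}^l(\omega)$ and mention the Pythagoras-based shortcut, both of which the paper leaves implicit.
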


\begin{proof}
Let $u\lT\in {\Zz^{\perp} \widetilde V}_p^l(\Tom)$. Set $r\lT \eq d^l u\lT$. We consider again the
minimization problems in~\eqref{eq:argmin_dm}. Recall that both problems
are well-posed and that $u\lT=u^*\lT$. We observe that $\Pi_p^l(u^*)\in {\widetilde V}_p^l(\Tom)$ 
by definition, that $d^l u^* = r\lT \in {\widetilde V}_p^{l+1}(\Tom)$, and that
\[
d^l(\Pi_p^l(u^*))=\Pi_p^{l+1}(d^l u^*)=\Pi_p^{l+1}(r\lT)=r\lT,
\]
where we used the commuting property~\eqref{ass:commut}, the fact that 
$r\lT \in {\widetilde V}_p^{l+1}(\Tom)$, and that $\Pi_p^{l+1}$ is a projection. 
This shows that $\Pi_p^l(u^*)$ is in the discrete minimization set. Using the $L^2$-stability property~\eqref{eq:bound_L2} and the continuous Poincar\'e inequality~\eqref{c_onto_abstract}, we infer that
\begin{equation}\label{eq_stab_com}\begin{split}
\|u\lT\|_{L^2(\omega)} = \|u^*\lT\|_{L^2(\omega)} &\le \|\Pi_p^l(u^*)\|_{L^2(\omega)} \\
&\le C_{\mathrm{st}} \|u^*\|_{L^2(\omega)} \\
&\le C_{\mathrm{st}} \tCP^l h_\omega \|d^l u^*\|_{L^2(\omega)}.
\end{split}\end{equation}
Since $\|d^l u^*\|_{L^2(\omega)}=\|r\lT\|_{L^2(\omega)}=\|d^l u\lT\|_{L^2(\omega)}$, we conclude that~\eqref{onto_abstract} holds true with constant $\tCPd{l}\le C_{\mathrm{st}} \tCP^l$.
\end{proof}

Operators satisfying~\eqref{ass:commut}--\eqref{eq:bound_L2} have been constructed in~\cite[Definition~3.1]{Ern_Gud_Sme_Voh_loc_glob_div_22} (for $l=2$) and in~\cite[Definition~2]{Chaum_Voh_H_curl_proj_24} for $l=1$. 
In all these cases, $C_{\mathrm{st}}$ depends on the shape-regularity parameter $\rho_{\Tom}$ and on the polynomial degree $p$, but is independent of the number of tetrahedra in $\Tom$. 
In the $\Hdiv$ setting ($l=2$), the $p$-robust projector of~\cite[Definition~3.3]{Demk_Voh_loc_glob_H_div_25} gives a constant $C_{\mathrm{st}}$ only depending on the shape-regularity parameter $\rho_{\Tom}$ if the domains of all twice-extended element stars in $\Tom$ are Lipschitz and with a contractible closure. All these cases are summarized in item~\eqref{it_3} of Theorem~\ref{thm_disc_Poinc}.

\begin{remark}[Example of a stable $\Hdiv$ commuting projection]
Let us rewrite the $\Hdiv$ construction from~\cite[Definition~3.1]{Ern_Gud_Sme_Voh_loc_glob_div_22} in the present setting so as to give an idea on how the key properties~\eqref{ass:commut}--\eqref{eq:bound_L2} follow. Let thus $l=2$, and consider for instance the case without boundary conditions. Let $\bu \in \bV^2(\omega) = \Hdiv$ be given. The construction of $\Pi_p^2 (\bu)$ proceeds in three steps.

\begin{enumerate}
\item \label{it_equil_1} On each tetrahedron $\tau \in \Tom$, one considers the $\bL^2(\tau)$-orthogonal projection of $\bu|_\tau$ onto the Raviart--Thomas space $\Rt_p(\tau)$ (see~\eqref{eq_RT}) under a divergence constraint,
\begin{equation}\label{eq_tauh}
    \bxih|_\tau \eq \Argmin_{\substack{\bv\lT \in \Rt_p(\tau)\\ \dive \, \bv\lT = \Pi_p^3(\dive \, \bu)}}
    \|\bu - \bv\lT\|_{\bL^2(\tau)},
\end{equation} 
where $\Pi_p^3$ is the $L^2(\omega)$-orthogonal projection onto $V_p^3(\Tom) = \pol_{p}(\Tom)$ (note that this projection is elementwise, since $\pol_{p}(\Tom)$ is a space of piecewise polynomials without any continuity requirement across the mesh interfaces). As there is no normal trace prescription for $\bxih$, it belongs to the discontinuous piecewise polynomial space $\Rt_p(\Tom)$ only.

\item \label{it_equil_2} For each vertex $\ver$ from the vertex set of $\Tom$, $\ver \in \Vom$, let $\Tha$ be the vertex star (all tetrahedra of $\Tom$ sharing $\ver$). 
One defines the Raviart--Thomas polynomial $\frh^\ver \in \Rt_p(\Tha) \cap \Hdva$ such that
\begin{equation}\label{eq_sigma_a}
\frh^\ver
\eq
\Argmin_{\substack{
\bv\lT \in \Rt_p(\Tha) \cap \Hdva
\\
\dive \, \bv\lT= \Pi_p^3(\psia \dive \, \bu + \grad \, \psia {\cdot} \bxih)
}}
\|\RTproj{p} (\psia \bxih) - \bv\lT\|_{\bL^2(\oma)}.
\end{equation} 
Here, $\psia$ is the hat basis function, the piecewise affine scalar-valued function taking value $1$ at the vertex $\ver$ and $0$ at the other vertices of $\Tom$, $\Hdva$ is the subspace of $\bH(\dive,\oma)$ with homogenous normal component over the faces where $\psia$ vanishes, and $\RTproj{p}$ is the elementwise canonical Raviart--Thomas projector (applied to the piecewise polynomial $\psia \bxih$, so that its action is well defined).

\item \label{it_equil_3} Finally, one extends $\frh^\ver$ by zero outside of the patch subdomain $\oma$ and defines $\Pi_p^2 (\bu) \in \bV_p^2(\Tom) = \Rt_p(\Tom) \cap \Hdiv$ via
\begin{equation}\label{eq_sigma}
    \Pi_p^2 (\bu) \eq \sum_{\ver \in \Vom} \frh^\ver.
\end{equation}
\end{enumerate}

Step~\eqref{it_equil_1} above projects $\bu$ onto the finite-dimensional space $\Rt_p(\Tom)$. 
Steps~\eqref{it_equil_2} and~\eqref{it_equil_3} above amount to the so-called flux equilibration from a posteriori error analysis~\cite{Dest_Met_expl_err_CFE_99,Brae_Pill_Sch_p_rob_09,Ern_Voh_p_rob_15}. Owing to the partition of unity by the hat functions,
\begin{equation}\label{eq_PU}
  \sum_{\ver \in \Vom} \psia = 1,
\end{equation}
the commuting property~\eqref{ass:commut} is straightforward since
\[
    \dive \, \Pi_p^2 (\bu) \refv{\eqref{eq_sigma}}{=} \sum_{\ver \in \Vom} \dive \,  \frh^\ver
    \refv{\eqref{eq_sigma_a}}{=} \sum_{\ver \in \Vom} \Pi_p^3(\psia \dive \, \bu + \grad \, \psia {\cdot} \bxih) \refv{\eqref{eq_PU}}{=} \Pi_p^3(\dive \, \bu).
\]
The projection property amounts to $\Pi_p^2 (\bu) = \bu$ if $\bu \in \Rt_p(\Tom) \cap \Hdiv$. This follows easily from the following three arguments: 1) $\bxih = \bu$ in~\eqref{eq_tauh}; 2) $\frh^\ver = \RTproj{p} (\psia \bu)$ in~\eqref{eq_sigma_a}, since the elementwise canonical Raviart--Thomas projector gives $\RTproj{p} (\psia \bu) \in \Rt_p(\Tha) \cap \Hdva$ and its commuting property implies that $\dive \, \RTproj{p} (\psia \bu) = \Pi_p^3(\dive \, (\psia \bu)) = \Pi_p^3(\psia \dive \, \bu + \grad \, \psia {\cdot} \bu)$; 3) we conclude that
$\Pi_p^2 (\bu) \eq \sum_{\ver \in \Vom} \frh^\ver = \sum_{\ver \in \Vom} \RTproj{p} (\psia \bu) = \bu$
from~\eqref{eq_sigma} and~\eqref{eq_PU}. 
Finally, the stability property~\eqref{eq:bound_L2} is proven in~\cite[Theorem~3.2, property~(3.7)]{Ern_Gud_Sme_Voh_loc_glob_div_22} using the stability of the vertex star problems~\eqref{eq_sigma_a} and the obvious stability of the elementwise problems~\eqref{eq_tauh}.
\end{remark}


\begin{remark}[$L^2$-stability of $\Pi_p^l$]
The assumptions in Lemma~\ref{lem:route2} on the projection $\Pi_p^l$ do not ask for
full stability in $L^2(\omega)$. Indeed, it suffices that $\Pi_p^l$ be defined on
the graph space ${\widetilde V}^l(\omega)$ and that the $L^2$-stability property~\eqref{eq:bound_L2}
holds true for functions so that $d^l u \in {\widetilde V}_p^{l+1}(\Tom)$ 
(and $d^l u$ is, in particular, a polynomial). 
\end{remark}

\begin{remark}[Graph-norm stability of $\Pi_p^l$]\label{rem_graph_stab}
Actually, the proof still works if 
one considers commuting projections that are stable in the graph norm
\begin{equation}\label{eq_graph_norm}
    \|v\|_{\widetilde V^l(\omega)}\eq\big( \|v\|_{L^2(\omega)}^2 + h_\omega^2 \|d^l v\|_{L^2(\omega)}^2 \big)^{\frac12},
\end{equation}
leading to the bound $\tCPd{l}\le C_{\mathrm{st}}\big(1+(\tCP^l)^2\big)^{\frac12}$.
Indeed, the final step~\eqref{eq_stab_com} of the above proofs now writes
\begin{align*}
\|u\lT\|_{L^2(\omega)} = \|u^*\lT\|_{L^2(\omega)} &\le \|\Pi_p^l(u^*)\|_{L^2(\omega)} \\
&\le C_{\mathrm{st}} \|u^*\|_{\widetilde V^l(\omega)} \\
&\le C_{\mathrm{st}} \big(1+(\tCP^l)^2\big)^{\frac12} h_\omega \|d^l u\lT\|_{L^2(\omega)}.
\end{align*}
\end{remark}

\begin{remark}[Fractional-order Sobolev stability of $\Pi_p^l$]
It is also possible to invoke regularity results stating that
$\Zz^\perp \widetilde V^l(\omega) \hookrightarrow H^s(\omega)$, $s>\frac12$,
with embedding constant $C_{\mathrm{emb}}$ so that 
\[
    \|v\|_{H^s(\omega)} \le C_{\mathrm{emb}} h_\omega \|d^l v\|_{L^2(\omega)} \qquad \forall 
v\in \Zz^\perp \widetilde V^l(\omega),
\]
where
\[
    \|v\|_{H^s(\omega)}^2 = \|v\|_{L^2(\omega)}^2 + h_\omega^s|v|_{H^s(\omega)}^2,\qquad |v|_{H^s(\omega)}^2 = \int_\omega\int_\omega \frac{|v(x)-v(y)|^2}{|x-y|^{3+2s}}dxdy.
\]
(Again, the scaling by $h_\omega$ is introduced for dimensional consistency.)
This allows one to consider commuting projections that are stable only in 
$H^s(\omega)$, $s>\frac12$, i.e.,
\[
    \|\Pi_p^l(z)\|_{L^2(\omega)} \leq C_{\mathrm{st}} \|z\|_{H^s(\omega)} \qquad \forall z\in  
    H^s(\omega).
\] 
The proof of the discrete Poincar\'e inequality then runs as follows.
For all $u\lT\in \Zz^{\perp} {\widetilde V}_p^l(\Tom)$, there exists 
$z\in \Zz^\perp \widetilde V^l(\omega)$ such that $d^l z=d^l u\lT$ (indeed,
take $z \eq u\lT-m$, where $m$ is the $L^2$-orthogonal projection of $u\lT$
onto $\Zz\widetilde V^l(\omega)$). We have
\[
\|u\lT\|_{L^2(\omega)}^2 = \bl u\lT,\Pi^l_p(z)\br_\omega + 
\bl u\lT,u\lT-\Pi^l_p(z)\br_\omega = \bl u\lT,\Pi^l_p(z)\br_\omega,
\]
since $u\lT-\Pi^l_p(z)\in \Zz \widetilde V_p^l(\Tom)$ (indeed,
$d^l(u\lT-\Pi^l_p(z))=d^l u\lT-\Pi_p^{l+1}(d^l z)=d^l u\lT-\Pi_p^{l+1}(d^l u\lT)=0$ since $\Pi_p^{l+1}$ leaves
$\widetilde V_p^{l+1}(\Tom)$ pointwise invariant).
The above identity together with the Cauchy--Schwarz inequality gives
\[
    \|u\lT\|_{L^2(\omega)} \le \|\Pi_p^l(z)\|_{L^2(\omega)}.
\]
Observing that $z\in H^s(\omega)$, we infer that
\begin{align*}
\|u\lT\|_{L^2(\omega)} &\le \|\Pi_p^l(z)\|_{L^2(\omega)} \le 
C_{\mathrm{st}} \|z\|_{H^s(\omega)} \le C_{\mathrm{st}} C_{\mathrm{emb}} h_\omega\|d^l z\|_{L^2(\omega)}
= C_{\mathrm{st}} C_{\mathrm{emb}} h_\omega \|d^l u\lT\|_{L^2(\omega)},
\end{align*}
which proves~\eqref{onto_abstract} with constant $\tCPd{l}\le C_{\mathrm{st}} C_{\mathrm{emb}}$. The above approach was considered in early works where $L^2$-stable or graph-stable commuting projections were not yet available. The idea is to trade some stability of $\Pi^p_l$ by invoking subtle regularity results on the curl and divergence operators. On the downside, estimating $\tCPd{l}$ now requires upper bounds on $C_{\mathrm{st}}$ and $C_{\mathrm{emb}}$. The present remark may be of interest for the sake of an historical perspective. The interested reader can find more details in~\cite{Boff_Cost_Daug_Demk_Hipt_inf_sup_Maxw_p_11} and the references therein.
\end{remark}

With the above developments, we can now add one more equivalent statement for discrete Poincar\'e inequalities, in the spirit of~\cite[Theorems~3.6 and~3.7]{arnold2010finite}. This completes the results on equivalent statements given in Section~\ref{sec_equivs}.

\begin{lemma}[Equivalence of discrete Poincar\'e inequalities with the existence of graph-stable commuting projections] \label{lem_eq_stab_proj_dP}
The discrete Poincar\'e inequalities~\eqref{onto_abstract} for $l\in\{1{:}2\}$ are equivalent to the existence of projections $\Pi_p^m:{\widetilde V}^m(\omega) \rightarrow {\widetilde V}_p^m(\Tom)$, $m\in\{1{:}3\}$, satisfying, for all $l\in\{1{:}2\}$, the commuting property
\begin{equation} \label{ass:commut_gen} 
d^l(\Pi_p^l(u)) = \Pi_p^{l+1}(d^l u) \qquad \forall u\in {\widetilde V}^l(\omega)
\end{equation}
and the graph-stability property
\begin{equation} \label{eq:bound_graph} 
\|\Pi_p^l(u)\|_{\widetilde V^l(\omega)} \leq C_{\mathrm{st}} \|u\|_{\widetilde V^l(\omega)} \qquad \forall u\in {\widetilde V}^l(\omega).
\end{equation}
\end{lemma}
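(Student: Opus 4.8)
The plan is to prove the two implications separately. For the direction ``graph-stable commuting projections imply discrete Poincar\'e'', I would reuse the argument of Remark~\ref{rem_graph_stab}: given $u\lT\in\Zz^\perp \widetilde V_p^l(\Tom)$, set $r\lT\eq d^l u\lT$, let $u^*$ be the continuous minimizer from~\eqref{eq:argmin_dm_cont}, observe that $\Pi_p^l(u^*)$ lies in the discrete minimization set (this uses precisely the commuting property~\eqref{ass:commut_gen} together with the projection property of $\Pi_p^{l+1}$), and then chain the graph-stability~\eqref{eq:bound_graph} with the continuous Poincar\'e inequality~\eqref{c_onto_abstract}, which holds by Proposition~\ref{prop_Poinc}. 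This gives~\eqref{onto_abstract} for $l\in\{1{:}2\}$ with $\tCPd{l}\le C_{\mathrm{st}}\big(1+(\tCP^l)^2\big)^{1/2}$.

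For the converse, the plan is to construct the projections explicitly, in decreasing order of $m$, controlling the relevant right-inverses by the discrete Poincar\'e inequalities. I would take $\Pi_p^3$ to be the $L^2(\omega)$-orthogonal projection onto $\widetilde V_p^3(\Tom)$, a bounded projection. For $l\in\{1{:}2\}$, the operator $d^l$ restricted to the $L^2(\omega)$-orthogonal complement $\Zz^\perp \widetilde V_p^l(\Tom)$ of its discrete kernel is a bijection onto $d^l(\widetilde V_p^l(\Tom))$ --- surjectivity being~\eqref{eq_div_map} for $l=2$ and the definition of the range for $l=1$, \cf~\eqref{eq_curl_map} --- and I denote its inverse by $S_p^l$; by Lemma~\ref{lem_op_norm}, equivalently by~\eqref{onto_abstract} itself, it satisfies $\|S_p^l(r\lT)\|_{L^2(\omega)}\le\tCPd{l}h_\omega\|r\lT\|_{L^2(\omega)}$. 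Writing $Q_p^l$ for the $L^2(\omega)$-orthogonal projection onto $\Zz \widetilde V_p^l(\Tom)$, I would then set, first for $l=2$ and then for $l=1$,
\[
\Pi_p^l(u)\eq S_p^l\big(\Pi_p^{l+1}(d^l u)\big)+Q_p^l(u),\qquad u\in\widetilde V^l(\omega).
\]
Granting momentarily that $\Pi_p^{l+1}(d^l u)\in d^l(\widetilde V_p^l(\Tom))$, so that $S_p^l$ applies, the three required properties are routine: $\Pi_p^l$ is a projection onto $\widetilde V_p^l(\Tom)$ (decompose a discrete field along $\Zz \widetilde V_p^l(\Tom)\oplus\Zz^\perp \widetilde V_p^l(\Tom)$ and use that $\Pi_p^{l+1}$ fixes $\widetilde V_p^{l+1}(\Tom)$); the commuting property~\eqref{ass:commut_gen} holds since $d^l\Pi_p^l(u)=d^l S_p^l(\Pi_p^{l+1}(d^l u))=\Pi_p^{l+1}(d^l u)$; and the graph-stability~\eqref{eq:bound_graph} follows by combining $\|S_p^l(\Pi_p^{l+1}(d^l u))\|_{L^2(\omega)}\le\tCPd{l}h_\omega\|\Pi_p^{l+1}(d^l u)\|_{L^2(\omega)}$, the bound on $\|\Pi_p^{l+1}(d^l u)\|_{L^2(\omega)}$ in terms of $\|d^l u\|_{L^2(\omega)}$ available at level $l+1$ (using $d^{l+1}d^l=0$), $\|Q_p^l(u)\|_{L^2(\omega)}\le\|u\|_{L^2(\omega)}$, and $\|d^l\Pi_p^l(u)\|_{L^2(\omega)}=\|\Pi_p^{l+1}(d^l u)\|_{L^2(\omega)}$; this yields a constant $C_{\mathrm{st}}$ controlled in terms of $\tCPd{1}$ and $\tCPd{2}$.

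The step I expect to be the main obstacle is exactly the reservation above: $\Pi_p^{l+1}(d^l u)\in d^l(\widetilde V_p^l(\Tom))$. For $l=2$ it is immediate, since $\Pi_p^3(\dive\bu)\in\widetilde V_p^3(\Tom)=\dive(\widetilde V_p^2(\Tom))$ by~\eqref{eq_div_map}. For $l=1$, the commuting property of the already-built $\Pi_p^2$ gives only $\Pi_p^2(\curl\bu)\in\Zz \widetilde V_p^2(\Tom)$ (as $\dive\Pi_p^2(\curl\bu)=\Pi_p^3(\dive\curl\bu)=0$), whereas $\curl(\widetilde V_p^1(\Tom))$ equals $\Zz \widetilde V_p^2(\Tom)$ only when the relevant (discrete and continuous) cohomology is trivial, \eg\ when $\partial\omega$ is connected, in particular when $\overline\omega$ is contractible. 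In the presence of nontrivial topology, $Q_p^l$ has to be replaced by a projection that, in addition, sends the continuous cocycles $d^l(\widetilde V^l(\omega))$ into the discrete cocycles $d^l(\widetilde V_p^l(\Tom))$; this is possible because the discrete de Rham complexes~\eqref{complex_h}--\eqref{complex_hm} have the same cohomology as their continuous counterparts~\eqref{complex}--\eqref{complex_m} --- a classical property of the N\'ed\'elec and Raviart--Thomas complexes on a simplicial mesh, independent of the discrete Poincar\'e inequalities --- so the required correction is finite-rank, hence bounded. Pushing this correction through while keeping the stability constant controlled by $\tCPd{l}$ is the delicate part; under a contractibility assumption (on $\overline\omega$, or on the twice-extended element stars) the obstruction disappears and the construction above applies verbatim.
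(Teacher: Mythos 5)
Your proposal is correct in substance and, for the converse direction, your explicit operator is in fact the \emph{same} operator as the paper's: the closed form $S_p^l\big(\Pi_p^{l+1}(d^l u)\big)+Q_p^l(u)$ is exactly the unique minimizer of the paper's constrained problem~\eqref{eq:def_Pi_min}, split along $\Zz^{\perp}\widetilde V_p^l(\Tom)\oplus\Zz\widetilde V_p^l(\Tom)$. The forward direction is identical to the paper's (Lemma~\ref{lem:route2} plus Remark~\ref{rem_graph_stab}). Where you genuinely diverge is in how graph-stability is established: the paper passes to the mixed formulation with a Lagrange multiplier, converts~\eqref{onto_abstract} into the inf-sup condition~\eqref{eq_inf_sup} via Lemma~\ref{lem_IS}, and invokes the abstract saddle-point stability theorem to get the bound $\big(10+8(\CPd{l})^2\big)^{1/2}$; you instead bound the two summands directly, using $\|S_p^l(r\lT)\|_{L^2(\omega)}\le\tCPd{l}h_\omega\|r\lT\|_{L^2(\omega)}$, the contraction property of $Q_p^l$, and the observation that $d^{l+1}d^l=0$ forces $\Pi_p^{l+1}(d^l u)=Q_p^{l+1}(d^l u)$ at the next level. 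Your route is more elementary (no saddle-point machinery) and yields a comparable constant; the paper's route is the one that generalizes most readily when one only has the inf-sup condition in hand.

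The reservation you raise about the range condition $\Pi_p^{l+1}(d^l u)\in d^l(\widetilde V_p^l(\Tom))$ for $l=1$ is a genuine point, and it is worth noting that it applies verbatim to the paper's own proof: the nonemptiness of the minimization set in~\eqref{eq:def_Pi_min} for $l=1$ requires exactly that $\Pi_p^2(\curl\,\bu)=Q_p^2(\curl\,\bu)$ lie in $\curl(\widetilde V_p^1(\Tom))$, which holds automatically only when $\Zz\widetilde V_p^2(\Tom)=\curl(\widetilde V_p^1(\Tom))$, i.e., when the relevant cohomology is trivial (e.g., $\partial\omega$ connected in the no-boundary-condition case, as stated after~\eqref{eq_div_map}). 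The paper does not comment on this, so you have not introduced a new obstacle but rather made explicit a hypothesis that is implicit in the paper's argument. Your proposed repair via the isomorphism between discrete and continuous harmonic fields is the standard fix, but as you yourself note, the quantitative control of the resulting correction is not carried out; as written, both your proof and the paper's are complete only under the topological triviality assumption, and your version has the merit of saying so.
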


\begin{proof} We show the two implications.

\textup{(i)} That the existence of graph-stable projections satisfying~\eqref{ass:commut_gen}--\eqref{eq:bound_graph} implies the discrete Poincar\'e inequalities~\eqref{onto_abstract} for $l\in\{1{:}2\}$ follows from Lemma~\ref{lem:route2} and Remark~\ref{rem_graph_stab}.

\textup{(ii)} Suppose the validity of the discrete Poincar\'e inequalities~\eqref{onto_abstract}. We show that this implies the existence of projections satisfying~\eqref{ass:commut_gen}--\eqref{eq:bound_graph}.
A generic way is to take $\Pi_p^3$ as the $L^2$-orthogonal projection onto $\widetilde V_p^3(\Tom)$ and to define $\Pi_p^l:\widetilde V^l(\omega)\rightarrow \widetilde V_p^l(\Tom)$ for all $l\in\{1{:}2\}$ by the following constrained quadratic minimization problems, similar to~\eqref{eq_min} and~\eqref{eq:argmin_dm_disc}: For all $u\in \widetilde V^l(\omega)$,
\begin{equation} \label{eq:def_Pi_min}
\Pi_p^l(u) \eq \Argmin_{\substack{v\lT \in \widetilde V_p^l(\Tom) \\ d^l v\lT = \Pi_p^{l+1}(d^l u)}} \|u-v\lT\|_{L^2(\omega)}^2,
\end{equation}
first for $l=2$ and then for $l=1$. Notice that the commuting property~\eqref{ass:commut_gen} is built in the definition of $\Pi^p_l$, so that only the stability in the graph norm~\eqref{eq:bound_graph} needs to be verified. To this purpose, we notice that the Euler optimality conditions for~\eqref{eq:def_Pi_min}, as in~\eqref{eq_min_EL} and~\eqref{eq:min_EL_disc}, read as follows: Find $\Pi_p^l(u)\in \widetilde V_p^l(\Tom)$ with $d^l(\Pi_p^l(u)) = \Pi_p^{l+1}(d^l u)$ such that
\[
\bl \Pi_p^l(u) - u,v\lT \br_\omega=0 \qquad \forall v\lT\in \widetilde V_p^l(\Tom) \text{ with } d^l v\lT=0.
\]
The mixed formulation using a Lagrange multiplier, as in~\eqref{eq_min_EL_Lag}, reads as follows: Find $\Pi_p^l(u)\in \widetilde V_p^l(\Tom)$ and $s\lT \in d^l(\widetilde V_p^l(\Tom))$ such that
\begin{alignat*}{4}
&\bl\Pi_p^l(u),v\lT\br_\omega - \bl s\lT,d^l v\lT\br_\omega &&= \bl u,v\lT\br_\omega&\qquad &\forall v\lT\in \widetilde V_p^l(\Tom), \\
&\bl d^l(\Pi_p^l(u)),t\lT\br_\omega &&= \bl d^l u,t\lT\br_\omega&\qquad &\forall t\lT\in d^l(\widetilde V_p^l(\Tom)).
\end{alignat*}
(Notice that $\bl \Pi_p^{l+1}(d^l u),t\lT\br_\omega=\bl d^l u,t\lT\br_\omega$ owing to the Euler optimality conditions for $\Pi_p^{l+1}$ and the fact that $d^{l+1}t\lT=0$). As highlighted in Section~\ref{sec:equiv_inf_sup}, the discrete Poincar\'e inequality~\eqref{onto_abstract} is equivalent to to the discrete inf-sup condition formulated using $L^2$-norms, see Lemma~\ref{lem_IS}. The inf-sup condition in the form of~\eqref{eq_inf_sup} readily implies the discrete inf-sup condition in the graph norm
\[
\inf_{t\lT \in d^l(\widetilde V_p^l(\Tom))} \,\, \sup_{v\lT \in {\widetilde V}_p^l(\Tom)} \frac{\bl t\lT,d^l v\lT\br_\omega}{\|t\lT\|_{L^2(\omega)}\|v\lT\|_{\widetilde V^l(\omega)}} \geq \frac{1}{\big(1+(\CPd{l})^2\big)^{\frac12} h_\omega}.
\]
Then, invoking~\cite[Theorem~4.2.3]{BBF13} or~\cite[Theorem~49.13]{EG_volII}, we obtain
\begin{align*}
    \|\Pi_p^l(u)\|_{\widetilde V^l(\omega)} & \leq \|u\|_{\widetilde V^l(\omega)} + 2 \big(1+(\CPd{l})^2\big)^{\frac12} h_\omega \|d^l u\|_{L^2(\omega)}\\
    & \leq \big(10+8(\CPd{l})^2\big)^{\frac12} \|u\|_{\widetilde V^l(\omega)}.
\end{align*}
This proves that the commuting projection $\Pi_p^l$ defined above is indeed stable in the graph norm. 
\end{proof}

\begin{remark}[Locality] The above graph-stable commuting projections are not necessarily locally defined and locally stable. Stable {\em local} commuting projections are designed in~\cite{Falk_Winth_loc_coch_14, Arn_Guz_loc_stab_L2_com_proj_21, Ern_Gud_Sme_Voh_loc_glob_div_22, Chaum_Voh_H_curl_proj_24, Demk_Voh_loc_glob_H_div_25}, see also the references therein. 
\end{remark}

\subsection{Route 3: Invoking piecewise Piola transformations} \label{sec_route_3}

In this section, we prove the discrete Poincar\'e inequality by a direct argument, thereby circumventing the need to invoke the continuous Poincar\'e inequalities. The discrete Poincar\'e constants resulting from the present proofs depend on the shape-regularity parameter $\rho_{\Tom}$, the number of tetrahedra $|\Tom|$, and the polynomial degree $p$, as summarized in item~\eqref{it_4} of Theorem~\ref{thm_disc_Poinc}. The proof shares ideas with the one given in~\cite{EGPV_HO:25}, but eventually employs a different argument to conclude.

The starting point, shared with~\cite{EGPV_HO:25}, is to introduce
reference meshes and piecewise Piola transformations on those meshes.
We enumerate the set of vertices (resp., edges, faces, and cells (tetrahedra)) 
in $\Tom$ as $\Vom\eq\{ v_1, \ldots, v_{N^{\mathrm{v}}}\}$
(resp., $\Eom\eq\{e_1,\ldots,e_{N^{\mathrm{e}}}\}$, 
$\Fom\eq\{f_1,\ldots,f_{N^{\mathrm{f}}}\}$, and $\Tom\eq\{\tau_1,\ldots,\tau_{N^{\mathrm{c}}}\}$ 
with $N^{\mathrm{c}}=|\Tom|$. All these geometric objects are oriented by
increasing vertex enumeration (see, e.g., \cite[Chapter~10]{ErnGuermondbook}). 
The topology and orientation of the mesh $\Tom$ is completely described by the connectivity arrays 
\begin{subequations} \label{eq_connect} \begin{align}
&\jev:\{1{:}N^{\mathrm{e}}\}\times\{0{:}1\}\rightarrow \{1{:}N^{\mathrm{v}}\}, \\
&\jfv:\{1{:}N^{\mathrm{f}}\}\times\{0{:}2\}\rightarrow \{1{:}N^{\mathrm{v}}\}, \\ 
&\jcv:\{1{:}N^{\mathrm{c}}\}\times\{0{:}3\}\rightarrow \{1{:}N^{\mathrm{v}}\},
\end{align} \end{subequations}
such that $\jev(m,n)$ is the global vertex number of the vertex $n$ of the edge $e_m$,
and so on (the local enumeration of vertices is by increasing enumeration order). 
Notice that the connectivity arrays only take integer values and are independent of the
actual coordinates of the vertices in the physical space $\R^3$. 

Let $\rho_\sharp>0$ be a positive real number and let $T_\sharp$ be a (finite) integer number.
The number of meshes with shape-regularity parameter bounded from above by $\rho_\sharp$ and cardinality bounded from above by $T_\sharp$ with different possible realizations of the connectivity arrays is bounded from above by a constant $\hat N_{\sharp}\eq\hat N(\rho_\sharp,T_\sharp)$ only depending on $\rho_\sharp$ and $T_\sharp$.
Thus, for each $\rho_\sharp$ and $T_\sharp$, there is a \emph{finite} set of 
reference meshes, which we denote by $\wTT\eq \wTT(\rho_\sharp,T_\sharp)$,
such that every mesh $\mathcal{T}$ with the shape-regularity parameter bounded from above by $\rho_\sharp$ and cardinality bounded from above by $T_\sharp$ has the same connectivity arrays 
as those of one reference mesh in the set $\wTT$. 
We enumerate the reference meshes in $\wTT$ as $\{\wT_1,\ldots,\wT_{\hat N_\sharp}\}$ and fix them once and for all. 
For each reference mesh, the element diameters are of order unity, and the shape-regularity parameter is chosen as small as possible (it is bounded from above by $\rho_\sharp$). 
For all $j\in\{1{:}\hat N_{\sharp}\}$, we let $\womega_j$ be the open, bounded, connected, 
Lipschitz polyhedral set covered by the reference mesh $\wT_j$. 
For all $l\in\{1{:}2\}$, we define the piecewise polynomial spaces $V_p^l(\wT_j)$
and $\mV_p^l(\wT_j)$ as in~\eqref{eq:local_spaces}, and
set ${\widetilde V}_p^l(\wT_j) \eq V_p^l(\wT_j)$ or $\mV_p^l(\wT_j)$ depending on whether
boundary conditions are enforced or not. 
We also define ${\Zz^{\perp} \widetilde V}_p^l(\wT_j)$ as the $L^2$-orthogonal complement of
the kernel subspace $\{\wu\lT\in {\widetilde V}_p^l(\wT_j) \;:\; d^l \wu\lT = 0\}$ in ${\widetilde V}_p^l(\wT_j)$.
Norm equivalence in finite dimension implies that, for all $j\in\{1{:}\hat N_{\sharp}\}$ and all $p\ge0$,
there exists a constant $\tCP^l(\wT_j,p)$ such that
\begin{equation} \label{onto_abstract_w}
\|\wu\lT\|_{L^2(\womega_{j})} \le \tCP^l(\wT_j,p)  \| d^l \wu\lT \|_{L^2(\womega_{j})}  \qquad \forall \wu\lT \in {\Zz^{\perp} \widetilde V}_p^l(\wT_j).
\end{equation}

Consider an arbitrary mesh $\Tom$ with shape-regularity parameter bounded from above by $\rho_\sharp$ and cardinality bounded from above by $T_\sharp$. Then there is an index $j(\Tom) \in \{1{:}\hat N_\sharp\}$ so that $\Tom$ and $\wT_{j(\Tom)}$ share the same connectivity arrays.
Therefore, $\Tom$ can be generated from $\wT_{j(\Tom)}$ by a piecewise-affine geometric mapping
$\bF_{\Tom}\eq\{\bF_\tau:\wtau\rightarrow \tau\}_{\tau\in\Tom}$, where all the geometric mappings $\bF_\tau$ are affine, invertible, with positive Jacobian, and $\bigcup_{\tau\in\Tom}\bF_{\tau}^{-1}(\tau)=\wT_{j(\Tom)}$.
For all $\tau\in\Tom$, let $\bJ_\tau$ be the Jacobian matrix of $\bF_\tau$.
We consider the Piola transformations $\psi_{\Tom}^l:L^2(\omega)\rightarrow L^2(\womega_{j(\Tom)})$, for all $l\in\{1{:}3\}$, such that $\psi_\tau^l\eq\psi_{\Tom}^l|_\tau$ is defined as follows: For all $v\in L^2(\tau)$,
\begin{subequations} \begin{align}
\psi_\tau^1(v) &\eq \bJ_\tau^{\textsc{t}} ( v \circ \bF_\tau), \\
\psi_\tau^2(v) &\eq \det(\bJ_\tau) \bJ_\tau^{-1} ( v \circ \bF_\tau), \\
\psi_\tau^3(v) &\eq \det(\bJ_\tau) ( v \circ \bF_\tau).
\end{align} \end{subequations}
The restricted Piola transformations (we keep the same notation for simplicity)
$\psi_{\Tom}^l : {\widetilde V}_p^l(\Tom) \rightarrow {\widetilde V}_p^l(\wT_{j(\Tom)})$ are isomorphisms. This follows from the fact that $\Tom$ and $\wT_{j(\Tom)}$ have the same connectivity arrays, that $\bF_{\Tom}$ maps any edge (face, tetrahedron) in $\wT_{j(\Tom)}$ to an edge (face, tetrahedron) of $\Tom$ with the same orientation, and that, for each tetrahedron $\tau\in\Tom$, $\psi_\tau^l$ is an isomorphism that preserves appropriate moments~\cite[Lemma~9.13 \& Exercise~9.4]{ErnGuermondbook}.
Moreover, the Piola transformations satisfy the following bounds:
\begin{equation} \label{eq_is1}
    \|\psi_{\Tom}^l\|_{\calL} \eq \|\psi_{\Tom}^l\|_{\calL(L^2(\omega);L^2(\womega_{j(\Tom)}))} \le C(\rho_{\sharp})(\overline{h}_{\Tom})^{l-\frac 32},
\end{equation}
where $\overline{h}_{\Tom}$ denotes the biggest diameter of a cell in $\Tom$, and they satisfy the following commuting properties: 
\begin{equation} \label{eq:commut_Piola}
d^{l}(\psi_{\Tom}^l(v)) = \psi_{\Tom}^{l+1}(d^l v) \qquad \forall v \in {\widetilde V}^l(\omega).
\end{equation}
We use the shorthand notation $\psi_{\Tom}^{-l}$ for the inverse of the Piola transformations. 
We have 
\begin{equation} \label{eq_is2}
    \|\psi_{\Tom}^{-l}\|_{\calL} \eq \|\psi_{\Tom}^{-l}\|_{\calL(L^2(\womega_{j(\Tom)});L^2(\omega))} \le C(\rho_{\sharp})(\underline{h}_{\Tom})^{-l+\frac 32},
\end{equation}
where $\underline{h}_{\Tom}$ denotes the smallest diameter of a cell in $\Tom$. The commuting property~\eqref{eq:commut_Piola} readily gives 
\begin{equation} \label{eq:commut_Piola_inv}
    \psi_{\Tom}^{-(l+1)}(d^l \wv) = d^l(\psi_{\Tom}^{-l}(\wv)) \qquad \forall \wv\in {\widetilde V}^l(\womega_{j(\Tom)}).
\end{equation}

\begin{lemma}[Discrete Poincar\'e inequalities invoking piecewise Piola transformations] 
The discrete Poincar\'e inequalities~\eqref{onto_abstract} hold true for all $l\in\{1{:}2\}$ with a constant $\tCPd{l}$ only depending on the shape-regularity parameter $\rho_{\Tom}$, the number of tetrahedra $|\Tom|$, and the polynomial degree $p$.
\end{lemma}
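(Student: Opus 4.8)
The plan is to transport the finite-dimensional Poincar\'e-type norm equivalence~\eqref{onto_abstract_w}, which holds on each of the finitely many reference meshes, back to the mesh $\Tom$ through the piecewise Piola transformations introduced above. Concretely, I would apply the reference-mesh construction with $\rho_\sharp \eq \rho_{\Tom}$ and $T_\sharp \eq |\Tom|$, so that there is an index $j \eq j(\Tom) \in \{1{:}\hat N_\sharp\}$ with $\Tom$ and $\wT_j$ sharing the same connectivity arrays, together with the associated isomorphisms $\psi_{\Tom}^m : {\widetilde V}_p^m(\Tom) \rightarrow {\widetilde V}_p^m(\wT_j)$ for $m\in\{1{:}3\}$. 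Given $u\lT \in {\Zz^{\perp} \widetilde V}_p^l(\Tom)$, I would push it forward to $\hat u \eq \psi_{\Tom}^l(u\lT) \in {\widetilde V}_p^l(\wT_j)$ and record that, thanks to the commuting property~\eqref{eq:commut_Piola} (and its inverse form~\eqref{eq:commut_Piola_inv}) together with the fact that $\psi_{\Tom}^{l+1}$ is an isomorphism, $\psi_{\Tom}^l$ restricts to a bijection between the kernel $\Zz {\widetilde V}_p^l(\Tom)$ of $d^l$ and the corresponding kernel $\Zz {\widetilde V}_p^l(\wT_j)$.

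The subtle point --- and the reason the argument diverges from the one in~\cite{EGPV_HO:25} --- is that the Piola transformation is \emph{not} an $L^2$-isometry, so $\hat u$ need not belong to ${\Zz^{\perp} \widetilde V}_p^l(\wT_j)$ and the orthogonality constraint cannot simply be transported. To get around this, I would split $\hat u = \hat u_0 + \hat u_\perp$ into its $L^2(\womega_j)$-orthogonal projection $\hat u_0$ onto $\Zz {\widetilde V}_p^l(\wT_j)$ and the remainder $\hat u_\perp \in {\Zz^{\perp} \widetilde V}_p^l(\wT_j)$. Since $d^l\hat u_0 = 0$, one has $d^l\hat u_\perp = d^l\hat u$, and~\eqref{onto_abstract_w} applied to $\hat u_\perp$ gives $\|\hat u_\perp\|_{L^2(\womega_j)} \le \tCP^l(\wT_j,p)\,\|d^l\hat u\|_{L^2(\womega_j)}$. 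Pulling back with $w_0 \eq \psi_{\Tom}^{-l}(\hat u_0)$ and $w_\perp \eq \psi_{\Tom}^{-l}(\hat u_\perp)$, I get $u\lT = w_0 + w_\perp$ with $w_0 \in \Zz {\widetilde V}_p^l(\Tom)$ by~\eqref{eq:commut_Piola_inv}. Now I exploit that $u\lT \in {\Zz^{\perp} \widetilde V}_p^l(\Tom)$ only against its own kernel part: $\bl u\lT, w_0\br_\omega = 0$, hence
\[
\|u\lT\|_{L^2(\omega)}^2 = \bl u\lT, w_\perp\br_\omega \le \|u\lT\|_{L^2(\omega)}\,\|w_\perp\|_{L^2(\omega)},
\]
so $\|u\lT\|_{L^2(\omega)} \le \|w_\perp\|_{L^2(\omega)} \le \|\psi_{\Tom}^{-l}\|_{\calL}\,\|\hat u_\perp\|_{L^2(\womega_j)}$. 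The point of this maneuver is that it never requires $w_\perp$ (or $\hat u_\perp$) to be orthogonal to anything in the physical configuration; only membership of $w_0$ in the kernel of $d^l$ is used.

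Finally, I would assemble the chain: using $d^l\hat u = \psi_{\Tom}^{l+1}(d^l u\lT)$ from~\eqref{eq:commut_Piola} and the operator-norm bounds~\eqref{eq_is1}--\eqref{eq_is2},
\[
\|u\lT\|_{L^2(\omega)} \le \|\psi_{\Tom}^{-l}\|_{\calL}\,\tCP^l(\wT_j,p)\,\|\psi_{\Tom}^{l+1}\|_{\calL}\,\|d^l u\lT\|_{L^2(\omega)} \le C(\rho_{\Tom})^2\,(\underline h_{\Tom})^{-l+\frac32}(\overline h_{\Tom})^{l-\frac12}\,\tCP^l(\wT_j,p)\,\|d^l u\lT\|_{L^2(\omega)}.
\]
It then remains to repackage the prefactor in the form $\tCPd{l}h_\omega$: the product $(\underline h_{\Tom})^{-l+\frac32}(\overline h_{\Tom})^{l-\frac12}$ has the dimension of a length (its exponents sum to $1$) and is controlled by $C(\rho_{\Tom},|\Tom|)\,h_\omega$, using $\overline h_{\Tom}\le h_\omega$ together with the standard fact that $h_\omega$ and $\underline h_{\Tom}$ are comparable up to a constant depending only on $\rho_{\Tom}$ and $|\Tom|$ (shape-regularity and connectedness of $\Tom$); and $\tCP^l(\wT_j,p)$ can be bounded by $\max_{1\le j\le\hat N_\sharp}\tCP^l(\wT_j,p)$, which depends only on $\rho_{\Tom}$, $|\Tom|$, and $p$. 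This delivers~\eqref{onto_abstract} with a constant $\tCPd{l}$ of the asserted type. I expect the only genuine obstacle to be the failure of the Piola maps to preserve $L^2$-orthogonality; the kernel-splitting trick in the middle paragraph is what resolves it, and the remainder is finite-dimensional norm equivalence plus dimensional bookkeeping.
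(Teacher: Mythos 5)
Your argument is correct and is essentially the paper's proof in a different guise: your $\hat u_\perp$ is precisely the reference-mesh minimizer $\wu\lT^*$ of~\eqref{eq:argmin_dm_wdm} (both are the unique element of ${\Zz^{\perp} \widetilde V}_p^l(\wT_{j(\Tom)})$ whose image under $d^l$ equals $\psi_{\Tom}^{l+1}(d^l u\lT)$), and your Cauchy--Schwarz step $\|u\lT\|_{L^2(\omega)}\le\|w_\perp\|_{L^2(\omega)}$ is exactly the quasi-minimality of $u\lT$ against the pulled-back competitor that the paper obtains from the constrained minimization. The subsequent transport of norms via~\eqref{eq_is1}--\eqref{eq_is2} and the bookkeeping using $\overline{h}_{\Tom}\le h_\omega$ and $\overline{h}_{\Tom}/\underline{h}_{\Tom}\le C(\rho_{\Tom},|\Tom|)$ coincide with the paper's.
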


\begin{proof}
Let $u\lT\in {\Zz^{\perp} \widetilde V}_{p}^l(\Tom)$ and set $r\lT \eq d^l u\lT$. As in~\eqref{eq_min} and~\eqref{eq:argmin_dm_disc}, we consider the following two (well-posed) constrained quadratic minimization problems:
\begin{equation} \label{eq:argmin_dm_wdm}
u\lT^* \eq \Argmin_{\substack{v\lT \in {\widetilde V}_p^l(\Tom) \\ d^l v\lT = r\lT}} \|v\lT\|_{L^2(\omega)}^2, 
\qquad
\wu\lT^* \eq \Argmin_{\substack{\wv\lT \in {\widetilde V}_p^l(\wT_{j(\Tom)}) \\ d^l \wv\lT = \whr\lT}} \|\wv\lT\|_{L^2(\womega_{j(\Tom)})}^2,
\end{equation}
with 
\begin{equation} \label{eq:rT}
    \whr\lT\eq \psi_{\Tom}^{l+1}(r\lT).
\end{equation}
The Euler conditions for the second minimization problem imply that $\wu\lT^* \in {\Zz^{\perp} \widetilde V}_p^l(\wT_{j(\Tom)})$. Owing to the discrete Poincar\'e inequality~\eqref{onto_abstract_w}, we infer that
\[
\|\wu\lT^*\|_{L^2(\womega_{j(\Tom)})} \le \tCP^l(\wT_{j(\Tom)},p)  \| \whr\lT \|_{L^2(\womega_{j(\Tom)})}.
\]
Moreover, we observe that $\psi_{\Tom}^{-l}(\wu\lT^*) \in {\widetilde V}_p^l(\Tom)$ and, 
owing to~\eqref{eq:commut_Piola_inv}, the constraint in the second problem in~\eqref{eq:argmin_dm_wdm}, and~\eqref{eq:rT}, we have 
\[
d^l(\psi_{\Tom}^{-l}(\wu\lT^*)) = \psi^{-(l+1)}_{\Tom}(d^l\wu\lT^*)= \psi^{-(l+1)}_{\Tom}(\whr\lT) = r\lT.
\]
Hence, $\psi_{\Tom}^{-l}(\wu\lT^*)$ is in the minimization set of the first problem in~\eqref{eq:argmin_dm_wdm}. This implies that
\begin{align*}
\|u\lT\|_{L^2(\omega)} = \|u\lT^*\|_{L^2(\omega)} &\le  \|\psi_{\Tom}^{-l}(\wu\lT^*)\|_{L^2(\omega)} \\
&\le \|\psi_{\Tom}^{-l}\|_{\calL} \|\wu\lT^*\|_{L^2(\womega_{j(\Tom)})} \\
&\le \|\psi_{\Tom}^{-l}\|_{\calL} \tCP^l(\wT_{j(\Tom)},p) \| \whr\lT \|_{L^2(\womega_{j(\Tom)})} \\
&\le \|\psi_{\Tom}^{-l}\|_{\calL}\|\psi_{\Tom}^{l+1}\|_{\calL} \tCP^l(\wT_{j(\Tom)},p) \| r\lT \|_{L^2(\omega)}\\
&= \|\psi_{\Tom}^{-l}\|_{\calL}\|\psi_{\Tom}^{l+1}\|_{\calL} \tCP^l(\wT_{j(\Tom)},p) \| d^l u\lT \|_{L^2(\omega)}.
\end{align*}
The bounds~\eqref{eq_is1}--\eqref{eq_is2} on the operator norm of the Piola maps and their inverse together give
$\|\psi_{\Tom}^{-l}\|_{\calL}\|\psi_{\Tom}^{l+1}\|_{\calL} \le C(\rho_{\Tom},|\Tom|) h_\omega$,
where we used that $\overline{h}_{\Tom} \leq h_\omega$ and $\overline{h}_{\Tom}/\underline{h}_{\Tom} \le C(\rho_{\Tom},|\Tom|)$. This implies that
\begin{align*}
\|u\lT\|_{L^2(\omega)} &\le \bigg( C(\rho_{\Tom},|\Tom|) \max_{j\in\{1{:}{\hat N}_\sharp\}} \tCP^l(\wT_j,p) \bigg) 
h_\omega \| d^l u\lT \|_{L^2(\omega)}.
\end{align*}
This completes the proof.
\end{proof}

\bibliographystyle{acm_mod}
\bibliography{references}

\end{document}